\newcommand{\HH}{\widehat{H}}
\renewcommand{\Im}{\operatorname{Im}}
\newcommand{\eq}{:=}
\newcommand{\ba}{\boldsymbol a}
\newcommand{\bb}{\boldsymbol b}
\newcommand{\bm}{\boldsymbol m}
\newcommand{\bn}{\boldsymbol n}
\newcommand{\bp}{\boldsymbol p}
\newcommand{\bq}{\boldsymbol q}
\newcommand{\bv}{\boldsymbol v}
\newcommand{\bx}{\boldsymbol x}
\newcommand{\by}{\boldsymbol y}
\newcommand{\bz}{\boldsymbol z}
\newcommand{\bell}{\boldsymbol \ell}
\newcommand{\CE}{\mathcal E}
\newcommand{\CF}{\mathcal F}
\newcommand{\CS}{\mathcal S}
\newcommand{\N}{\mathbb N}
\newcommand{\Z}{\mathbb Z}
\newcommand{\R}{\mathbb R}
\newcommand{\C}{\mathbb C}
\newcommand{\bxi}{\boldsymbol \xi}
\newcommand{\xxx}[2]{\bx ^{#1,#2}}
\newcommand{\xxi}[2]{\bxi^{#1,#2}}
\newcommand{\gs}[3]{\Psi_{#1,#2,#3}}
\newcommand{\gskmn}{\gs{k}{\bm}{\bn}}
\newcommand{\gskmnp}{\gs{k}{\bm'}{\bn'}}
\newcommand{\gskmnpp}{\gs{k}{\bm''}{\bn''}}
\newcommand{\xxkm}{\xxx{k}{\bm}}
\newcommand{\xikn}{\xxi{k}{\bn}}
\newcommand{\xxkmp}{\xxx{k}{\bm'}}
\newcommand{\xiknp}{\xxi{k}{\bn'}}
\newcommand{\bzero}{\boldsymbol 0}
\newcommand{\dx}{\mathrm{d}\bx}
\newcommand{\dy}{\mathrm{d}\by}
\newtheorem{theorem}{Theorem}[section]
\newtheorem{lemma}[theorem]{Lemma}
\newtheorem{corollary}[theorem]{Corollary}
\newtheorem{proposition}[theorem]{Proposition}
\numberwithin{equation}{section}
\title{Decay of coefficients and approximation rates in Gabor Gaussian frames}
\author{T. Chaumont-Frelet$^\star$}
\author{M. Ingremeau$^\dagger$}
\address{\vspace{-.5cm}}
\address{\noindent \tiny \textup{$^\star$University C\^ote d'Azur, Inria, CNRS, LJAD}}
\address{\noindent \tiny \textup{$^\dagger$University C\^ote d'Azur, CNRS, LJAD}}
\begin{document}

\maketitle
\thispagestyle{empty}

\begin{abstract}
The aim of this note is to present a self-contained proof of the fact that a function
can be approximated using a linear combination of Gaussian coherent states, with a number
of terms controlled in terms of the smoothness and of the decay at infinity of the function.
This result, which is essential in \cite{chaumontfrelet_dolean_ingremeau_2022a}, can easily
be obtained using advanced results on modulation spaces, but the proof presented here is
completely elementary and self-contained.

\vspace{.5cm}
\noindent
{\sc Keywords:} Approximation theory, Gabor frames,  Time-frequency analysis
\end{abstract}

\section{Introduction}

In \cite{chaumontfrelet_dolean_ingremeau_2022a}, the authors proposed a novel approach to solve numerically a family of PDEs with a small parameter,  including the Helmholtz equation at large frequencies. This approach relied on some ideas coming from semiclassical analysis, as well as on results concerning \emph{Gabor frames}.

Gabor frames are a standard tool to decompose functions into
a discrete sum of ``coherent states'', which are
localised both in position and Fourier spaces. Such expansions are somehow
similar to Fourier expansions, but are more subtle, as Gabor frames do not
form orthonormal bases.  
The study of Gabor frames
is a field of research which has been very active for the past decades;  we refer the reader to the monographs  \cite{grochenig2001foundations, christensen2003introduction,  benyi2020modulation} for an account of recent advances.

The main result of time-frequency analysis which is used in \cite{chaumontfrelet_dolean_ingremeau_2022a} is  a decay property for the coefficients of functions when decomposed in a Gabor frame, in terms of the regularity of
the functions and of their decay at infinity. These results are analogous to the
standard decay properties of Fourier coefficients, and permit to show that
a finite number of coherent states provide a good approximation to any smooth rapidly decaying function.

Such a result can be easily deduced from advanced results of time-frequency analysis concerning  \emph{modulation spaces}., as we will explain in Appendix \ref{Sec:Modulation}.  

The aim of this note is to give a completely self-contained proof of all the results concerning Gabor frames which are used in \cite{chaumontfrelet_dolean_ingremeau_2022a}, and in particular, of the fact that a smooth rapidly decaying function can be approximated using few coherent states.  This note is thus intended mainly for readers who want to understand where the results of \cite{chaumontfrelet_dolean_ingremeau_2022a} come from,  without learning the recent developments in time-frequency analysis.

We insist on the fact that the results presented here are not new,  and are not as general as they could be; for instance,  we will consider only Gabor frames built from Gaussian windows (which simplifies the proofs), while more general window functions could be considered using modulation spaces.

The remainder of the document is organized as follows.
In Section \ref{section_settings}, we introduce the main notations we will use in this
work. In Section \ref{Sec:Frame}, we recall standard properties of
Gabor frames and state the main results that are used in \cite{chaumontfrelet_dolean_ingremeau_2022a}. In Section \ref{sec:proofframe}, we recall the proof of the frame property for the Gabor system.  In Section \ref{sec:Decay}, we prove properties
of decay of the coefficients, which will be essential in the proof of our main result, given in
Section \ref{Sec:Proof}. Finally, Appendices \ref{App:Elem}, \ref{App:DecayPSStar} and
\ref{appendix_sharp} collect necessary technical results, while Appendix \ref{Sec:Modulation} explains how the main result of this note can be recovered using results concerning modulation spaces.

\section{Settings}
\label{section_settings}

\subsection{Scaling parameter}

In the sequel, all our quantities will depend on a parameter $k \geq 1$,
which is an arbitrary but fixed real number. This is more convenient
for applications to PDEs, where it is useful to choose $k$ related to
the wavelength of the solution.

Although we may not indicate it explicitly, all our results hold true
for any value of $k$, with the estimates holding uniformly. Besides,
$d \in \N^\star$ denotes the number of space dimensions. 

\subsection{Multi-indices}
For a multi-index $\ba \in \N^d$, $[\ba] \eq a_1+\dots+a_d$
denotes its usual $\ell^1$ norm. If $v: \R^d \to \C$, the notation
\begin{equation*}
\partial^{\ba} v
\eq
\frac{\partial^{a_1}}{\partial x_1}
\dots
\frac{\partial^{a_d}}{\partial x_d} v
\end{equation*}
is employed for the partial derivatives in the sense of distributions, whereas
$\bx^{\ba} \eq x_1^{a_1} \cdot \ldots \cdot x_d^{a_d}$.

\subsection{Functional spaces}

$L^2(\R^d)$ is the usual space of complex-valued square integrable
functions over $\R^d$. We respectively denote by $\|\cdot\|_{L^2(\R^d)}$
and $(\cdot,\cdot)$ its norm and inner-product. If $\omega \subset \R^d$ is a measurable
subset and $v \in L^2(\R^d)$, we set $\|v\|_{L^2(\omega)} = \|\chi_\omega v\|_{L^2(\R^d)}$,
where $\chi_\omega$ is the set function of $\omega$. We also denote by
$H^p(\R^d)$ the usual Sobolev space of order $p \in \N$, that we equip with
the norm
\begin{equation*}
\|v\|_{H^p_k(\R^d)}^2
\eq
\sum_{[\ba] \leq p} k^{-2[\ba]} \|\partial^{\ba} v\|_{L^2(\R^d)}^2
\quad
\forall v \in H^p(\R^d).
\end{equation*}
Notice that the above norm is equivalent to the usual Sobolev norm,
but with equivalence constant depending on $k$. We refer the reader
to, e.g., \cite{adams_fournier_2003a} for an in-depth presentation
of the above spaces.

We will also need a family of weighted Sobolev spaces. We first introduce the norms
\begin{equation*}
\|v\|_{\HH^p_k(\R^d)}^2
\eq
\sum_{[\ba] \leq p}
\sum_{q=0}^{p-|\ba|}
k^{-2[\ba]} \||\bx|^{q} \partial^{\ba} v\|_{L^2(\R^d)}^2
\qquad \forall v \in \CS(\R^d)
\end{equation*}
for all $p \in \N$, and we define $\HH^p(\R^d)$ as the closure 
of $\CS(\R^d)$ in $L^2(\R^d)$ with respect to $\|{\cdot}\|_{\HH^p_k(\R^d)}$.
For future references, we note that if $u \in \HH^{p+1}(\R^d)$,
we have $x_j u \in \HH^p(\R^d)$ with 
\begin{equation}
\label{eq_norm_xu}
\|x_j u\|_{\HH^{p}_k(\R^d)} \leq C_p  \|u\|_{\HH^{p+1}_k(\R^d)}
\end{equation}
for $1 \leq j \leq d$. 

\subsection{Fourier transform}

For $v \in L^2(\R^d)$, we define a weighted Fourier transform by
\begin{equation*}
\CF_k(v)(\bxi)
\eq
\left(\frac{k}{2\pi}\right)^{d/2} \int_{\R^d} v(x) e^{-ik\bx \cdot \bxi} \dx
\end{equation*}
for a.e. $\bxi \in \R^d$. For all $v \in L^2(\R^d)$, we then have
\begin{equation*}
\|\CF_k(v)\|_{L^2(\R^d)} =  \|v\|_{L^2(\R^d)}.
\end{equation*}
In addition, if $v \in H^1(\R^d)$ or respectively if $|\bx| v \in L^2(\R^d)$, then
\begin{equation*}
\CF_k(\partial_j v) = ik \xi_j \CF_k(v),
\qquad
ik \CF_k(x_j v) = \partial_j \CF_k(v)
\end{equation*}
for $j \in \{1,\dots,d\}$. In particular, for any $p\in \N$, we have for every $v\in H^p(\R^d)$
\begin{equation}\label{eq:SobNormFourier}
\|v\|_{H^p_k(\R^d)}^2 = \sum_{[\ba] \leq p}  \|\bxi^{\ba} \CF_k v\|_{L^2(\R^d)}^2.
\end{equation}

\subsection{Generic constants}

Throughout the manuscript, $C$ denotes a constant that may vary from one occurrence
to the other and only depends on the space dimension $d$. If $p,q,\dots$ are previously
introduced symbols, then the notation $C_{p,q,\dots}$ is similarly used to indicate a
constant that solely depends on $d$ and $p,q,\dots$.

\section{Main results concerning Gabor frames of Gaussian states}
\label{Sec:Frame}

\subsection{The Gabor frame}

For $[\bm,\bn] \in \Z^{2d}$, we set
\begin{equation}
\label{eq_def_xx_xi}
\xxx{k}{\bm}
\eq
\sqrt{\frac{\pi}{k}}\bm,
\qquad
\xxi{k}{\bn}
\eq
\sqrt{\frac{\pi}{k}}\bn,
\end{equation}
so that, the couples $[\xxkm,\xikn]$ form a lattice of the phase space $\Z^{2d}$.
We associate with each point $[\bm,\bn]$ in the lattice the Gaussian state
\begin{equation*}
\gs{k}{\bm}{\bn} (\bx)
\eq
\left (\frac{k}{\pi} \right )^{d/4}
e^{-\frac{k}{2}|\bx-\xxx{k}{\bm}|^2}
e^{ik(\bx-\xxx{k}{\bm}) \cdot \xxi{k}{\bn}}.
\end{equation*}

The set $(\gskmn)_{[\bm,\bn] \in \Z^{2d}}$ forms a \emph{frame}, i.e.,
there exist two constants $\alpha,\beta > 0$ that only depends on $d$, such that
\begin{equation}\label{eq_frame}
\alpha^2 \|u\|^2_{L^2(\R^d)}
\leq
\sum_{[\bm,\bn] \in \Z^{2d}}|(u,\gskmn)|^2
\leq
\beta^2 \|u\|^2_{L^2(\R^d)}
\quad
\forall u \in L^2(\R^d).
\end{equation}
This fact was first shown in \cite{daubechies_grossman_meyer_1986a} when $d=1$
and $k=1$ (from which the case $d>1$ and $k\neq 1$ follow easily), and we will recall their proof in Section \ref{sec:proofframe}. We refer the reader to \cite{christensen2003introduction} for generalisations of this result.

\subsection{The dual frame}

Given $u \in L^2(\R^d)$, we call the sequence $((u,\gskmn))_{[\bm,\bn] \in \Z^{2d}}$
the coefficients of $u$ in the frame, and we introduce the operator
$T_k: L^2(\R^d) \to \ell^2(\Z^{2d})$ mapping a function $u$ to its coefficients:
\begin{equation}\label{eq:DefCoefOp}
(T_k u)_{\bm,\bn} \eq (u,\gskmn) \qquad [\bm,\bn] \in \Z^{2d}.
\end{equation}
The fact that $T_k$ maps into $\ell^2(\Z^{2d})$ is a direct consequence of \eqref{eq_frame}.
Straightfoward computations show that the adjoint $T_k^\star: \ell^2(\Z^{2d}) \to L^2(\R^d)$
of $T_k$ is given by
\begin{equation*}
T_k^\star U = \sum_{[\bm,\bn] \in \Z^{2d}} U_{\bm,\bn} \gskmn,
\end{equation*}
for all $U \in \ell^2(\Z^{2d})$ and, in particular, we have
\begin{equation}
\label{eq_TksTku}
(T_k^\star T_k) u = \sum_{[\bm,\bn] \in \Z^{2d}} (u,\gskmn) \gskmn \quad \forall u \in L^2(\R^d).
\end{equation}
The operator $(T_k^\star T_k)$ is self-adjoint, and the estimates in \eqref{eq_frame}
imply that it is bounded from above and from below. As a result,
$(T_k^\star T_k)$ is invertible, and we will we denote by $S_k$ its inverse.

We call set of functions $(\gskmn^\star)_{\bm,\bn \in \Z^d}$ defined by
\begin{equation*}
\gskmn^\star \eq S_k \gskmn \quad \forall [\bm,\bn] \in \Z^{2d},
\end{equation*}
the ``dual frame'' of $(\gskmn)_{[\bm,\bn] \in \Z^{2d}}$. It is indeed a frame,
since, for any $u\in L^2(\R^d)$, we have
\begin{equation*}
\sum_{[\bm,\bn]\in \Z^{2d}} |(u,\gskmn^\star)|^2
=
\sum_{[\bm,\bn]\in \Z^{2d}} |(u,S_k \gskmn)|^2
=
\sum_{[\bm,\bn]\in \Z^{2d}} |(S_k u,\gskmn)|^2
=
\|T_k S_k u\|^2,
\end{equation*}
and it suffices to use \eqref{eq_frame} and to note that $S_k$ is continuous
with a continuous inverse.

Importantly, recalling \eqref{eq_TksTku}, we see that for all $u \in L^2(\R^d)$, we have
\begin{equation}
\label{eq:ResolutionOfIdentity}
u
=
\sum_{[\bm,\bn] \in \Z^{2d}} (u,\gskmn^\star)\gskmn
\end{equation}
and
\begin{equation}
\label{eq:OtherResolutionIdentity}
u
=
\sum_{[\bm,\bn] \in \Z^{2d}} (u,\gskmn)\gskmn^\star.
\end{equation}

\subsection{Approximation in Gabor frames}

The main result of this note is a generalization of \eqref{eq:ResolutionOfIdentity}
when the series is truncated to a finite number of terms. Specifically, considering
$D > 0$ such that $k^{1/2}D > \sqrt{\pi}$, we define an ``interpolation operator''
$\Pi_D: L^2(\R^d) \to L^2(\R^d)$ by
\begin{equation}
\label{eq_definition_pi}
\Pi_D u \eq \sum_{|(\xxkm,\xikn)| \leq D} (u,\gskmn^\star) \gskmn.
\end{equation}
Notice that, recalling the definitions of $\xxkm$ and $\xikn$ in
\eqref{eq_def_xx_xi}, the requirement that $k^{1/2}D > \sqrt{\pi}$ simply
means that there is at least one point in the summation set, so that it is
not a restrictive assumption.

\begin{theorem}[Approximation in the Gabor frame]
\label{theorem_approximation}
Consider two non-negative integers $p$ and $r$. For all $u \in \HH^{p+r}(\R^d)$,
we have $\Pi_D u \in \HH^p(\R^d)$. In addition, the following estimate holds true:
\begin{equation}
\label{eq_approximation}
\|u - \Pi_D u\|_{\HH^p_k(\R^d)}
\leq
C_{p,r}  D^{-r} \|u\|_{\HH^{p+r}_k(\R^d)}.
\end{equation}
\end{theorem}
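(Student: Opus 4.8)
The plan is to reduce the weighted Sobolev estimate to a decay estimate on the coefficients of $u$ in the dual frame, which should be the content of Section~\ref{sec:Decay}. Concretely, the first step is to establish an inequality of the form
\begin{equation*}
\sum_{[\bm,\bn] \in \Z^{2d}} (1 + |[\xxkm,\xikn]|)^{2s} \, |(u,\gskmn^\star)|^2 \leq C_s \|u\|_{\HH^s_k(\R^d)}^2
\end{equation*}
for every $s \in \N$, which is the Gabor analogue of \eqref{eq_fourier_decay}. To get this, I would differentiate and multiply by powers of $\bx$ and use the identities $\CF_k(\partial_j v) = ik\xi_j \CF_k(v)$ and $ik\CF_k(x_j v) = \partial_j \CF_k(v)$ together with the fact that the map $\gskmn \mapsto \gskmnp$ (and likewise for the dual frame, whose elements also enjoy Gaussian decay in phase space — this is what Appendix~\ref{App:DecayPSStar} is for) has kernel decaying rapidly in $|[\bm,\bn]-[\bm',\bn']|$. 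The inner product $(u,\gskmn^\star)$ can be rewritten, via integration by parts in $\bx$ and in $\bxi$ (Fourier side), to transfer the weight $(1+|[\xxkm,\xikn]|)^{s}$ onto derivatives and polynomial weights acting on $u$, at the cost of a rapidly decaying convolution kernel on the lattice; summability of that kernel then yields the claimed bound.

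The second step is to convert the truncation error into this tail. Using \eqref{eq:ResolutionOfIdentity}, $u - \Pi_D u = \sum_{|[\xxkm,\xikn]| > D} (u,\gskmn^\star)\gskmn = T_k^\star P_D T_k^\star{}^{-1}\!$-type object — more precisely, writing $U_{\bm,\bn} = (u,\gskmn^\star)$, we have $u - \Pi_D u = \sum_{|[\xxkm,\xikn]|>D} U_{\bm,\bn}\gskmn$. To estimate its $\HH^p_k$ norm, I would again use that $\partial^{\ba}(\bx^{\bq}\gskmn)$ is, up to constants bounded by $C_{p}(1+|\xxkm|+|\xikn|)^{p}$, a linear combination of Gaussian states $\gs{k}{\bm'}{\bn'}$ centered near $[\bm,\bn]$ with rapidly decaying coefficients. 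Hence
\begin{equation*}
\|u - \Pi_D u\|_{\HH^p_k(\R^d)} \leq C_p \Big\| \big( (1 + |[\xxkm,\xikn]|)^{p}\, U_{\bm,\bn} \, \mathbf 1_{|[\xxkm,\xikn]|>D} \big)_{[\bm,\bn]} \Big\|_{\ell^2}
\end{equation*}
up to another rapidly-decaying-kernel convolution absorbed by Young's inequality on $\ell^2(\Z^{2d})$ (this uses the upper frame bound in \eqref{eq_frame} applied to the synthesis operator, or rather its boundedness $T_k^\star: \ell^2 \to L^2$ together with the phase-space localisation just described). On the tail, $(1+|[\xxkm,\xikn]|)^{p} \leq D^{-r}(1+|[\xxkm,\xikn]|)^{p+r}$, so the right-hand side is at most $C_{p,r} D^{-r}\|(1+|[\xxkm,\xikn]|)^{p+r} U_{\bm,\bn}\|_{\ell^2}$, and Step~1 with $s = p+r$ closes the argument. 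The membership $\Pi_D u \in \HH^p(\R^d)$ follows because the truncated sum is finite and each $\gskmn \in \CS(\R^d) \subset \HH^p(\R^d)$.

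The main obstacle I expect is Step~1 in the form needed here: controlling the dual frame coefficients $(u,\gskmn^\star)$ rather than the primal coefficients $(u,\gskmn)$. The primal coefficients are easy — $\gskmn$ is an explicit Gaussian, so one integrates by parts directly. But $\gskmn^\star = S_k \gskmn$ is only defined abstractly through the inverse of $T_k^\star T_k$, and to run the integration-by-parts argument one needs quantitative phase-space localisation of $\gskmn^\star$: that $\partial^{\ba}(\bx^{\bq} \gskmn^\star)$ is controlled, with Gaussian-type (or at least rapid) decay away from the lattice point $[\xxkm,\xikn]$ in a manner uniform in $k$. This is presumably the job of Appendix~\ref{App:DecayPSStar}; granting it, the rest is bookkeeping with convolution kernels on $\Z^{2d}$. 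A secondary technical point is ensuring all constants are uniform in $k \geq 1$, which the scaling $\xxkm = \sqrt{\pi/k}\,\bm$ and the $k$-weighted norms are designed to guarantee, but which must be tracked through each integration by parts.
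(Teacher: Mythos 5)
Your plan is essentially correct and rests on the same two pillars as the paper --- decay of the dual coefficients $(u,\gskmn^\star)$ (the paper's Proposition \ref{proposition_decay_star}) and almost-orthogonality of derivative- and polynomial-modified Gaussian states --- but it assembles them differently, and one step needs the paper's detour to be made rigorous. For Step 1, your plan to integrate by parts directly against $\gskmn^\star$ requires pointwise phase-space localisation of the dual window together with all its derivatives and polynomial weights; Appendix \ref{App:DecayPSStar} proves something weaker, namely sub-exponential off-diagonal decay of the Gram matrix $(\gskmn^\star,\gskmnp^\star)$ as in \eqref{eq:PSStar}. The paper avoids the stronger localisation entirely: it first proves the decay estimate \eqref{eq_decay} for the \emph{primal} coefficients $(u,\gskmn)$ by integrating by parts against the explicit Gaussians (Proposition \ref{proposition_decay}), and then transfers it to the dual coefficients through the expansion \eqref{eq:DeuxiemeDecompoPsiStar} and a Schur-type estimate using \eqref{eq:PSStar}; your Step 1 should be repaired along these lines, and you already have all the ingredients to do so. For Step 2 you propose a weighted Bessel bound for the synthesis operator, $\|\sum U_{\bm,\bn}\gskmn\|_{\HH^p_k(\R^d)}\leq C_p\|(1+|(\xxkm,\xikn)|)^p U_{\bm,\bn}\|_{\ell^2}$, applied to the tail; this is correct (the Gram matrix of the states $|\bx|^q\partial^{\ba}\gskmn$, after factoring out the diagonal weights, retains Gaussian off-diagonal decay, so Schur's test applies --- a computation comparable to the paper's Proposition \ref{lemma_L2_products}) and is arguably cleaner than what the paper does. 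The paper instead works with the analysis operator via the lower frame bound in \eqref{eq_frame}, splits physical space at $|\bx|=2D$ and the lattice at $|(\xxkm,\xikn)|=2D$, and uses explicit far-field smallness of $\Pi_D u$ (Corollary \ref{corollary_error_xL2}); your route trades that case analysis for one extra almost-orthogonality lemma, which is a reasonable exchange.
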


The estimate in Theorem \ref{theorem_approximation} is sharp.
Specificially, we have the following result.

\begin{proposition}[Sharp estimate]
\label{proposition_sharp}
For all $p,r \in \N$, there exists a constant $D_\star > 0$ depending on $p$, $r$
and $d$, such that for all $D \geq D_\star$, there exists $u \in \HH^{p+r}(\R^d)$
with
\begin{equation}
\label{eq:SharpUp}
\|u - \Pi_D u\|_{\HH^p_k(\R^d)} \geq C_{p,r} D^{-r} \|u\|_{\HH^{p+r}(\R^d)}.
\end{equation}
\end{proposition}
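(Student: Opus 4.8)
The plan is to exhibit, for each $D$ large, a single coherent state---or a finite combination thereof---that realises the worst case in \eqref{eq_approximation}. The natural candidate is a function built from a coherent state living exactly on the boundary of the truncation set. Concretely, I would fix a lattice point $[\bm_D,\bn_D] \in \Z^{2d}$ with $|[\xxx{k}{\bm_D},\xxi{k}{\bn_D}]| \sim D$ (choosing $\bn_D = \bzero$ and $\bm_D$ aligned along a coordinate axis, so that $|\xxx{k}{\bm_D}| \sim D$ and hence $[\bm_D] \sim D\sqrt{k/\pi}$), and consider $u_D \eq \gskmnpp[k][\bm_D][\bn_D]$, i.e. a single Gaussian state of the frame, possibly rescaled. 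The point is that $\Pi_D$ annihilates the ``tail'' of the expansion of $u_D$, and for a function so sharply localised near $[\xxx{k}{\bm_D},\bzero]$ the discarded terms carry a non-negligible fraction of the $\HH^p_k$ mass.

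First I would estimate $\|u_D\|_{\HH^{p+r}_k(\R^d)}$ from above: since $u_D$ is (a translate of) a fixed Gaussian centred at $\xxx{k}{\bm_D}$, every derivative $\partial^{\ba} u_D$ is again a polynomial times that Gaussian, and the weight $|\bx|^q$ evaluated against a Gaussian centred at $\xxx{k}{\bm_D}$ contributes a factor $\sim |\xxx{k}{\bm_D}|^q \sim (D)^q$ (up to lower order). Tracking the powers of $k$ hidden in $\partial^{\ba}$ and in the normalisation $(k/\pi)^{d/4}$, one gets $\|u_D\|_{\HH^{p+r}_k(\R^d)} \leq C_{p,r} D^{p+r}$. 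Next I would bound $\|u_D - \Pi_D u_D\|_{\HH^p_k(\R^d)}$ from below. Because $u_D$ is itself a frame element, $u_D = \sum_{[\bm,\bn]} (u_D,\gskmn^\star)\gskmn$ and $u_D - \Pi_D u_D = \sum_{|[\xxkm,\xikn]| > D} (u_D,\gskmn^\star)\gskmn$. The dual-frame decay estimates from Appendix \ref{App:DecayPSStar} guarantee $|(u_D,\gskmn^\star)|$ is concentrated near $[\bm_D,\bn_D]$ with Gaussian-type decay in the lattice distance; by the lower frame bound \eqref{eq_frame} applied suitably (or by directly pairing $u_D - \Pi_D u_D$ against a well-chosen test state near the boundary), the $L^2$ norm of the tail is bounded below by a fixed constant times the norm of, say, the single neighbouring state just outside the ball, which is $\Theta(1)$ independent of $D$. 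Multiplying by the $|\bx|^p$ weight, which near $\xxx{k}{\bm_D}$ is $\sim D^p$, yields $\|u_D - \Pi_D u_D\|_{\HH^p_k(\R^d)} \geq C_{p,r} D^p$. Dividing the two bounds gives exactly $D^{-r}$, which is \eqref{eq:SharpUp}.

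The main obstacle I anticipate is the lower bound on the truncation error: it is not automatic that throwing away the terms outside the ball leaves something of size comparable to one boundary term, because the tail coefficients could in principle cancel after multiplication by the (non-orthogonal) states $\gskmn$. To handle this I would not try to lower-bound the full tail directly; instead I would test $u_D - \Pi_D u_D$ against a fixed auxiliary function---most conveniently the dual state $\gskmnpp[k][\bm_D^+][\bn_D^+]^{\star}$ (or just $\gskmnpp[k][\bm_D^+][\bn_D^+]$) attached to a lattice point $[\bm_D^+,\bn_D^+]$ sitting just outside the truncation ball and at bounded lattice distance from $[\bm_D,\bn_D]$. Using \eqref{eq:ResolutionOfIdentity} together with the off-diagonal decay of $(\gskmn,\gskmnp^{\star})$, this inner product equals the ``main'' coefficient (a fixed positive quantity, essentially $(\gskmnpp[k][\bm_D][\bn_D],\gskmnpp[k][\bm_D^+][\bn_D^+])$, which is $\Theta(1)$ by translation invariance) plus a remainder that is summably small, hence nonzero for $D$ large. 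Cauchy--Schwarz then converts this into the desired lower bound on $\|u_D-\Pi_D u_D\|_{L^2}$, and the threshold $D_\star$ is precisely what makes the remainder smaller than the main term. The remaining computations---differentiating Gaussians, counting powers of $k$, and invoking \eqref{eq_norm_xu} to move the weight $|\bx|^p$ onto the centre $\xxx{k}{\bm_D}$---are routine and I would relegate them to Appendix \ref{appendix_sharp}.
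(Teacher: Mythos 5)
Your overall strategy is the same as the paper's --- take $u$ to be a single coherent state whose $\HH^{p}_k$ and $\HH^{p+r}_k$ norms scale like $D^{p}$ and $D^{p+r}$ respectively (this is exactly Lemma \ref{lemma_norm_gskmn}), and show the truncation error retains a fixed fraction of the $\HH^p_k$ mass. The gap is in your lower bound for the error. By placing the state \emph{on} the boundary of the truncation set you are forced to prove that the discarded tail is bounded below, and your duality-pairing argument does not close. First, the ``main coefficient'' in $(\,u_D-\Pi_D u_D,\Psi^{+})$ is not $(\gs{k}{\bm_D}{\bn_D},\gs{k}{\bm_D^+}{\bn_D^+})$ but an inner product involving a \emph{dual} frame element, and the paper only ever establishes upper bounds on $(\gskmn^\star,\gskmnp)$ (Lemma \ref{Lem:PSDerivPsiStar}); no $\Theta(1)$ lower bound is available off the diagonal. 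Second, and more seriously, the ``remainder'' you must dominate is a sum over lattice points at \emph{bounded} distance from $[\bm_D,\bn_D]$; its size is a $D$-independent constant determined by the local lattice geometry near the boundary, so it does not become small as $D\to\infty$. Choosing $D_\star$ large therefore does not make the remainder smaller than the main term, and since the constants $C_\varepsilon$ in \eqref{eq:PSStar} are not quantitatively controlled relative to the frame bounds $\alpha,\beta$, there is no way to conclude that the main term wins. (A further, fixable, issue: to convert a lower bound on $\|u_D-\Pi_Du_D\|_{L^2}$ into one on $\||\bx|^p(u_D-\Pi_Du_D)\|_{L^2}$ you must first localise the error near $\xxkm$, which also requires an argument.)

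The paper avoids all of this by placing the state \emph{well outside} the truncation ball: it takes $\bm=(2\lceil k^{1/2}D\rceil,0,\dots,0)$ and $u=\gs{k}{\bm}{\bzero}$, so that every retained lattice point $[\bm',\bn']$ with $|(\xxx{k}{\bm'},\xxi{k}{\bn'})|\le D$ sits at lattice distance $\gtrsim k^{1/2}D$ from $[\bm,\bzero]$. Lemma \ref{Lem:PSDerivPsiStar} then makes \emph{every} retained coefficient $(u,\gskmnp^\star)$ of size $e^{-c\sqrt{k^{1/2}D}}$, and summing over the $O((k^{1/2}D)^{2d})$ retained states gives $\|\Pi_Du\|_{\HH^p_k(\R^d)}\le C D^{p}e^{-a\sqrt{D}}$. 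The reverse triangle inequality then yields $\|u-\Pi_Du\|_{\HH^p_k(\R^d)}\ge \tfrac12\|u\|_{\HH^p_k(\R^d)}\gtrsim D^{p}$ once $D\ge D_\star$ --- this exponential-versus-constant comparison is where the threshold $D_\star$ genuinely enters, not in your remainder estimate. If you want to salvage your boundary-point variant, you would need a quantitative lower bound on the off-diagonal decay of the dual Gramian, which is substantially harder than anything proved in the paper; I recommend switching to the exterior placement.
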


Theorem \ref{theorem_approximation} will be proved in Sections \ref{sec:Decay} and \ref{Sec:Proof}, and we will give an alternative proof of it in Appendix \ref{Sec:Modulation}. Proposition \ref{proposition_sharp} will be proven in Appendix \ref{appendix_sharp}.

\section{Proof of the frame property}\label{sec:proofframe}

The aim of this section is to recall the proof of (\ref{eq_frame}),  following closely \cite{daubechies_grossman_meyer_1986a}.

First of all, note that for $f \in L^2(\R^d)$, if we set 
\begin{equation}\label{eq:Rescaling}
(\delta_k f) (x) := k^{-d/4} f\left(k^{-1/2} x \right),
\end{equation}
we have $\|\delta_k f\|_{L^2(\R^d)} = \|f\|_{L^2(\R^d)}$ and
\begin{equation}\label{eq:DefRescaling}
\delta_k \gskmn = \Psi_{1, \bm, \bn}  .
\end{equation}
Therefore, it suffices to prove \eqref{eq_frame} for $k=1$, and the general case will follow.
To lighten notations, we shall write $\Psi_{\bm,\bn}$ instead of $\Psi_{1,\bm,\bn}$ in this section.

Letting $\square \eq [-\sqrt{\pi}, \sqrt{\pi} )^d \times [ - \sqrt{\pi}/2, \sqrt{\pi}/2)^d$,
we define the Zak transform $Z : L^2(\R^d) \longrightarrow L^2(\square)$ as follows.
If $f\in C_c(\R^d)$, we set
\begin{equation*}
(Z f)(\bv,\bq)
\eq
\frac{1}{(2\sqrt{\pi})^{d/2}}
\sum_{\bell\in \Z^d} e^{i \sqrt{\pi} \bv\cdot \bell} f(\bq- \sqrt{\pi}\bell)
\qquad
\forall (\bv,\bq) \in \square.
\end{equation*}
One readily checks that $\|Z f\|_{L^2(\R^d)}= \|f\|_{L^2(\square)}$, so that $Z$ extends
to a unitary transform.

Let $\bm,\bn\in \Z^d$. We have
\begin{align*}
(Z \Psi_{\bm,\bn})(\bv,\bq)
&=
\frac{1}{2^{d/2} \pi^{3d/4}} \sum_{\bell\in \Z^d}
e^{i \sqrt{\pi} \bv\cdot \bell}
e^{i (\bq- \sqrt{\pi}\bell-\sqrt{\pi}\bm)\cdot \sqrt{\pi}\bn}
e^{-\frac{|\bq- \sqrt{\pi}\bell-\sqrt{\pi}\bm |^2}{2}}
\\
&=
\frac{1}{2^{d/2} \pi^{3d/4}}
e^{i \sqrt{\pi} \bv\cdot \bm} e^{i \bq\cdot \sqrt{\pi}\bn}
\sum_{\bell' \in \Z^d}
e^{i \sqrt{\pi} \bv\cdot \bell'}
e^{i \pi \bell' \cdot \bn}
e^{-\frac{|\bq- \sqrt{\pi}\bell'|^2}{2}}
\\
&=
e^{i \sqrt{\pi} \bv\cdot \bm} e^{i \bq\cdot \sqrt{\pi}\bn} (Z \Psi_{0,\bn'}) (\bv,\bq),
\end{align*}

The, given $f \in L^2(\R^d)$, we can write
\begin{align*}
\|T_1 f\|_{\ell^2}^2
&=
\sum_{\bn'\in \{0,1\}^d} \sum_{\bm, \bp \in \Z^d}
\left|\left\langle Z \Psi_{\bm,2\bp+\bn'}, Z f \right\rangle \right|^2
\\
&=
\sum_{\bn'\in \{0,1\}^d} \sum_{\bm, \bp \in \Z^d}
\left | \int_{\square}
(Z f)(\bv,\bp)  e^{-i \sqrt{\pi} \bv\cdot \bm} e^{-i \bq\cdot \sqrt{\pi}(2\bp+\bn')}
(Z \Psi_{0,\bn'})(\bv,\bq)\mathrm{d}\bv \mathrm{d}\bq
\right |^2
\\
&=
(2\pi)^d \int_{\square} |(Z f)(\bv,\bq)|^2
\sum_{\bn'\in \{0,1\}^d} \left| Z \Psi_{0, \bn'}(\bv,\bq) \right|^2
\mathrm{d}\bv \mathrm{d}\bq,
\end{align*}
by Plancherel's formula. Therefore, if we introduce the function 
\begin{equation}\label{eq:Theta}
\Theta(\bv,\bq) \eq \sum_{\bn'\in \{0,1\}^d} \left| Z \Psi_{0, \bn'}(\bv,\bq) \right|^2
\qquad
\forall (\bv,\bq) \in \square,
\end{equation}
we have
\begin{equation*}
(2\pi)^d \min_{\bv,\bq} \Theta(\bv,\bq) \|f\|^2_{L^2(\R^d)}
\leq \|T_1 f \|_{\ell^2}^2
\leq
(2\pi)^d \max_{\bv,\bq} \Theta(\bv,\bq) \|f\|^2_{L^2(\R^d)}.
\end{equation*}

We define $\theta$ to be $\Theta$ when $d=1$. That is, for
$(v,q) \in  \left[-\sqrt{\pi}, \sqrt{\pi} \right) \times \left[ -\sqrt{\pi}/2, \sqrt{\pi}/2 \right)$,
we set
\begin{equation*}
\theta(v,q)
\eq
\frac{1}{2\pi} \left|
\sum_{\ell \in \Z}
e^{i \sqrt{\pi} v\ell }
e^{-(q- \sqrt{\pi} \ell)^2)/2}
\right |^2
+
\frac{1}{2\pi} \left |
\sum_{\ell \in \Z}
e^{i \sqrt{\pi} v \ell}
e^{i\sqrt{\pi} (q-\sqrt{\pi} \ell)}
e^{-(q - \sqrt{\pi}\ell)^2)/2}
\right |^2.
\end{equation*}
We then have, for $\bv= (v_1,..., v_d)$, $\bq= (q_1,..., q_d)$
\begin{equation*}
\Theta(\bv,\bq) = \prod_{j=1}^d \theta(v_j, q_j),
\end{equation*}
and therefore
\begin{equation*}
\min\limits_{\bv,\bq} \Theta(\bv,\bq)= \left(\min\limits_{v,q}  \theta(v,q)\right)^d
\qquad
\max\limits_{\bv,\bq} \Theta(\bv,\bq)= \left(\max\limits_{v,q} \theta(v,q)\right)^d.
\end{equation*}
Recalling that the Jacobi Theta functions are given, for any $z, \tau \in \C$ with $\Im \tau >0$
by
$$\vartheta(z, \tau) := \sum_{n\in \Z} \exp (\pi i n^2 \tau + 2i\pi n z),$$
we see that
\begin{align*}\theta(v,q) &= \frac{e^{-q^2}}{2\pi} \left|\sum_{\ell \in \Z} \exp \left( -\frac{\pi \ell^2}{2}  + \ell (i v  \sqrt{\pi} + q \sqrt{\pi})  \right) \right|^2 +  \frac{e^{-q^2}}{2\pi}  \left|\sum_{\ell \in \Z} \exp \left( -\frac{\pi \ell^2}{2}  + \ell (i v  \sqrt{\pi} + q \sqrt{\pi}) - i\pi  \right) \right|^2  \\
&= \frac{e^{-q^2}}{2\pi} \left( \left|\vartheta \left(\frac{v- i q }{2  \sqrt{\pi}}, \frac{i}{2} \right)\right|^2 + \left|\vartheta \left( \frac{v-iq}{2 \sqrt{\pi}} - \frac{1}{2}, \frac{i}{2} \right)\right|^2 \right).
\end{align*}

This gives us the upper bound in (\ref{eq_frame}), buy continuity of $\vartheta$. To obtain the lower bound, we note that  the quantity $\vartheta(z,\tau)$ vanishes if and only if we have
$$z= \left(n+ \frac{1}{2}\right) + \tau \left(m+ \frac{1}{2}\right)$$
for $m, n\in \Z$.
Hence, the term $\left|\vartheta \left(\frac{v- i q}{2  \sqrt{\pi}}, \frac{i}{2} \right)\right|^2$ vanishes if and only if $v=- \sqrt{\pi}$ and $q= - \frac{\sqrt{\pi}}{2}$. On the other hand, the term $ \left|\vartheta \left( \frac{v-iq}{2 \sqrt{\pi}} - \frac{1}{2}, \frac{i}{2} \right)\right|^2$ vanishes if and only if $v=0$ and $q= - \frac{\sqrt{\pi}}{2}$.
Therefore, $\min_{v,q} \theta(v, q)>0$, which proves the lower bound in \eqref{eq_frame}.

\section{Decay of coefficients in the frames}\label{sec:Decay}

\subsection{Decay of coefficients in the dual frame}

Recall that the estimates in \eqref{eq_frame}
imply that whenever $u \in L^2(\R^d)$, its coefficients belongs to $\ell^2$,
with an $\ell^2$ norm comparable to $\|u\|_{L^2(\R^d)}$. 
In Proposition
\ref{proposition_decay} we generalize this idea by showing that if $u$
exhibits more regularity, stronger decay properties hold true for its
coefficients sequence.

\begin{proposition}[Coefficients' decay]
\label{proposition_decay}
For all $p \in \N$ and $u \in \HH^p(\R^d)$, we have
\begin{equation}
\label{eq_decay}
\sum_{[\bm, \bn]\in \Z^{2d}}
\left (|\xxkm|^2 + |\xikn|^2\right )^p
|(u,\gskmn)|^2
\leq
C_p  \|u\|^2_{\HH^p_k(\R^d)}.
\end{equation}
\end{proposition}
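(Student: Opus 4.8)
The plan is to exploit the explicit Gaussian form of the states $\gskmn$ to turn "$u \in \HH^p$" into a decay statement for the coefficients $(u,\gskmn)$. The central observation is that for each multi-index $\ba$ with $[\ba] \le p$ and each $q$ with $0 \le q \le p - [\ba]$, the quantity $k^{-[\ba]} |\bx|^q \partial^\ba u$ lies in $L^2(\R^d)$ with norm controlled by $\|u\|_{\HH^p_k(\R^d)}$. So it suffices to bound, for each admissible pair $(\ba,q)$, a weighted sum of the coefficients by $\|\,|\bx|^q \partial^\ba u\,\|_{L^2}$, possibly after transferring derivatives and multiplications onto the Gaussian via integration by parts. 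Concretely, I would first establish the two algebraic identities
\begin{equation}
\label{eq_proposal_identities}
(u, \gskmn) = \big(\partial_j u, A_j \gskmn\big) + \cdots, \qquad (u, \gskmn) = \big(x_j u, B_j \gskmn\big) + \cdots,
\end{equation}
where $A_j$ and $B_j$ are simple functions of $\bx$ obtained by differentiating $e^{-\frac{k}{2}|\bx - \xxkm|^2}e^{ik(\bx-\xxkm)\cdot\xikn}$: one readily computes $\partial_j \gskmn = \big(-k(x_j - (x^{k,\bm})_j) + ik (\xi^{k,\bn})_j\big)\gskmn$, which exhibits factors of $\xikn$ and of $(\bx - \xxkm)$. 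The point is that each application of "integration by parts against $\partial_j$" brings out a factor of order $|\xikn|$ plus a localized remainder, while "moving an $x_j$" brings out a factor of order $|\xxkm|$ plus a localized remainder; iterating $p$ times produces the weight $(|\xxkm|^2 + |\xikn|^2)^p$ up to lower-order terms and terms involving $|\bx - \xxkm|^j$ absorbed into the Gaussian.

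Once the pointwise coefficient identities are in place, the second ingredient is a \emph{frame-type upper bound for weighted systems}: for a fixed smooth rapidly decaying window $g$ and its translates-modulations $g_{\bm,\bn}$ on the same lattice, one has $\sum_{[\bm,\bn]} |(v, g_{\bm,\bn})|^2 \le C \|v\|_{L^2}^2$ with $C$ depending only on $g$ (and $d$). This is the standard Bessel bound for Gabor systems with good windows — it follows from \eqref{eq_frame} applied to suitable windows, or from a direct estimate — and the excerpt already grants us the frame property, so I would invoke it for each of the finitely many windows $A_j, B_j$ and their products that arise. The presence of the polynomial prefactors $(\bx - \xxkm)^j$ in the remainders is harmless because $|\bx - \xxkm|^j e^{-\frac{k}{2}|\bx - \xxkm|^2}$ is again (after rescaling by $\sqrt{k}$) a fixed Schwartz window translated along the lattice; the $k$-dependence of all constants stays uniform because the scaling $\bx \mapsto \sqrt{k}\,\bx$ maps the problem exactly onto the $k=1$ case, consistent with the paper's convention.

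The combinatorial bookkeeping of expanding $(|\xxkm|^2+|\xikn|^2)^p |(u,\gskmn)|^2$ and matching each resulting term to some $\|\,|\bx|^{q}\partial^\ba u\,\|_{L^2}^2$ with $[\ba]+q \le p$ is where I expect the main obstacle to lie — not in any single estimate, but in verifying that after $p$ successive integrations by parts the total number of derivatives plus powers of $\bx$ landing on $u$ never exceeds $p$, so that the right-hand side is genuinely $\|u\|_{\HH^p_k}^2$ and not a norm of higher order. The key structural fact making this work is that producing one power of $|\xikn|$ costs exactly one derivative on $u$, and producing one power of $|\xxkm|$ costs either one factor $x_j$ on $u$ or one factor $(\bx-\xxkm)$ that is absorbed by the Gaussian — never more — so the "regularity budget" $p$ is exactly respected. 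I would organize the induction on $p$: assuming the bound for $p-1$, write $(|\xxkm|^2+|\xikn|^2)^p = (|\xxkm|^2+|\xikn|^2)\cdot(|\xxkm|^2+|\xikn|^2)^{p-1}$, use \eqref{eq_proposal_identities} once to trade the extra factor for one derivative or one multiplication by $x_j$ on $u$ (invoking \eqref{eq_norm_xu} for the latter), and close. A short appeal to density of $\CS(\R^d)$ in $\HH^p(\R^d)$ handles the passage from Schwartz functions, where all manipulations are justified, to general $u \in \HH^p(\R^d)$.
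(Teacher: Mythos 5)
Your proposal is essentially sound and shares the paper's overall strategy: an induction on $p$ driven by the identity $\partial_j \gskmn = k\big(x_j^{k,\bm}-x_j+i\xi_j^{k,\bn}\big)\gskmn$, integration by parts to move derivatives onto $u$, and \eqref{eq_norm_xu} to absorb the stray factors of $x_j$ into the weighted norm. The genuine difference is in how the weight $\big(|\xxkm|^2+|\xikn|^2\big)^p$ is produced. The paper expands the monomial $z^p$ in the basis of polynomials $Q_\ell$ from Lemma \ref{lemma_differentiation}, so that $\big(x_j^{k,\bm}-x_j-i\xi_j^{k,\bn}\big)^p u$ paired with $\gskmn$ becomes \emph{exactly} a linear combination of $(\partial_j^\ell u,\gskmn)$ for $\ell\le p$; every pairing stays against the original Gaussian frame, and the only operator bound ever invoked is the upper frame inequality \eqref{eq_frame}. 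A binomial expansion then separates the constant $\big(x_j^{k,\bm}-i\xi_j^{k,\bn}\big)^p$ from the powers of $x_j$, which are handled by the induction hypothesis. Your one-factor-at-a-time peeling instead isolates $\xi_j^{k,\bn}$ and $x_j^{k,\bm}$ separately, which generates remainder pairings of $u$ against the Hermite-type windows $(x_j^{k,\bm}-x_j)^{\ell}\gskmn$; for these you need a Bessel bound that, contrary to what you suggest, does \emph{not} follow from \eqref{eq_frame} (that inequality is specific to the Gaussian window). Such a bound is true and standard (Schwartz windows on a lattice form Bessel sequences, see e.g.\ Christensen's book cited in the paper, or a direct overlap computation as in Proposition \ref{lemma_L2_products}), so this is a fixable omission rather than a fatal gap — but it is an extra lemma your route requires and the paper's route avoids. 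A cleaner variant of your peeling that sidesteps the issue entirely: work with the complex combination, noting $\big(x_j^{k,\bm}-i\xi_j^{k,\bn}\big)(u,\gskmn) = -k^{-1}(\partial_j u,\gskmn)+(x_j u,\gskmn)$, and apply the induction hypothesis to $\partial_j u$ and $x_j u$; this keeps all pairings against $\gskmn$ and reproduces the paper's bookkeeping in one line.
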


\begin{proof}
We will prove \eqref{eq_decay} by induction. Note that, for $p=0$,
it is an immediate consequence of \eqref{eq_frame}. Then, suppose that there exists
$p \in \N$ such that \eqref{eq_decay} holds for all $q\leq p-1$, and let
us show that it also holds for $q=p$.

Let us fix $j\in \{1,\dots,d\}$. The polynomials $(Q_\ell)_{\ell=0}^p$ from Lemma
\ref{lemma_differentiation} all have different degrees, so that they form a basis
of $\R_{p}[X]$. In particular, we may find real numbers $(a_{\ell})_{\ell=0}^p$
solely depending on $p$ such that
\begin{equation*}
z^p = \sum_{\ell=0}^p a_\ell Q_\ell(z),
\end{equation*}
and in particular, introducing for the sake of shortness the function
$y: \bx \to (\xxkm-\bx+i\xikn)_j$, we have
\begin{equation}
\label{tmp_poly_expr}
k^{p/2} y^p = \sum_{\ell=0}^p
a_{\ell} Q_\ell(k^{1/2} y).
\end{equation}
Then, using \eqref{tmp_poly_expr}, and recalling \eqref{eq_diff_gskmn}, we obtain
\begin{equation*}
k^{p/2}
(\overline{y}^p u, \gskmn)
=
\sum_{\ell=0}^p a_\ell (u, Q_\ell(k^{1/2}y)\gskmn)
=
\sum_{\ell=0}^pa_\ell k^{-\ell/2}
\left(u, \partial_j^\ell \gskmn \right).
\end{equation*}
and integration by parts shows that
\begin{equation*}
(\overline{y}^p u, \gskmn)
=
\sum_{\ell=0}^p (-1)^\ell a_\ell k^{-(p+\ell)/2} (\partial_j^\ell u,\gskmn),
\end{equation*}
where we also multiplied both sides by $k^{-p/2}$.
Recalling the definition of $y$ and using the upper-bound in \eqref{eq_frame},
after summing over $[\bm,\bn] \in \Z^{2d}$, we see that
\begin{multline}
\label{tmp_first_term}
\sum_{[\bm, \bn]\in \Z^{2d}}
|((x_j^{k,\bm}-i\xi_j^{k,\bn}+x_j)^p u,\gskmn)|^2
\\
\leq
C_p \sum_{\ell=0}^p
a_\ell^2 k^{-(\ell+p)}
\|\partial_j^\ell u\|_{L^2(\R^d)}^2
\leq
C_p \sum_{\ell=0}^p
a_\ell^2 k^{-2\ell}
\|\partial_j^\ell u\|_{L^2(\R^d)}^2
\leq
C_p \|u\|_{H_k^p(\R^d)}^2,
\end{multline}
where we have used the facts that the numbers $a_\ell$ only depend on $p$
and that $k \geq 1$. Since we have 
\begin{equation*}
\left (x_j^{k, \bm} - i\xi_j^{k,\bn}\right )^p
=
\left (x_j^{k, \bm} - x_j - i\xi_j^{k,\bn}\right )^p
-
\sum_{q=0}^{p-1} \binom{p}{q} (-x_j)^{p-q} \left (x_j^{k, \bm} - i\xi_j^{k,\bn}\right )^q,
\end{equation*}
and
\begin{equation*}
\left (
(x_j^{k,\bm})^2 + (\xi_j^{k,\bn})^2
\right )^p
|(u,\gskmn)|^2 
=
\left |
\left (
((x_j^{k,\bm} - i\xi_j^{k,\bn})^p u,\gskmn
\right )
\right |^2,
\end{equation*}
we may write that
\begin{multline}
\label{tmp_result_j}
\left (
(x_j^{k,\bm})^2 + (\xi_j^{k,\bn})^2
\right )^p
|(u,\gskmn)|^2 
\leq
\\
C_p
\left (
\left |
\left (
((x_j^{k,\bm} - x_j - i\xi_j^{k,\bn})^p u,\gskmn
\right )
\right |^2
+
\sum_{q=0}^{p-1}\left ((x_j^{k,\bm})^2+(\xi_j^{k,\bn})^2\right )^q
|(x_j^{p-q} u,\gskmn)|^2
\right ).
\end{multline}
Thanks to the induction hypothesis, we have
\begin{equation*}
\sum_{q=0}^{p-1}
\sum_{[\bm,\bn] \in \Z^{2d}}
\left ((x_j^{k,\bm})^2+(\xi_j^{k,\bn})^2\right )^q
|(x_j^{p-q} u,\gskmn)|^2
\leq
C_p
\sum_{q=0}^{p-1}
\|x^{p-q}u\|^2_{\HH^q_k(\R^d)},
\end{equation*}
and \eqref{eq_norm_xu} then ensures that
\begin{equation}
\label{tmp_second_term}
\sum_{q=0}^{p-1}
\sum_{[\bm,\bn] \in \Z^{2d}}
\left ((x_j^{k,\bm})^2+(\xi_j^{k,\bn})^2\right )^q
|(x_j^{p-q} u,\gskmn)|^2
\leq
C_p  \|u\|_{\HH^p_k(\R^d)}^2.
\end{equation}
We then sum \eqref{tmp_result_j} over $[\bm,\bn] \in \Z^{2d}$. Using
\eqref{tmp_first_term} and \eqref{tmp_second_term} to respectively
bound the first and second term in the right-hand side, we obtain
\begin{equation*}
\sum_{[\bm,\bn] \in \Z^{2d}}
\left (
(x_j^{k,\bm})^2 + (\xi_j^{k,\bn})^2
\right )^p
|(u,\gskmn)|^2 
\leq
C_p \|u\|_{\HH^p_k(\R^d)}^2,
\end{equation*}
and \eqref{eq_decay} follows by summing over $j \in \{1,\dots,d\}$,
which concludes the proof by induction.
\end{proof}

\subsection{Decay of coefficients in the initial frame}

We now prove an analogue of Proposition \ref{proposition_decay} for the
coefficients of the dual coefficients $(u,\gskmn^\star)$, which are used
for the expansion in the initial frame $(\gskmn)_{[\bm,\bn] \in \Z^{2d}}$.

We start with a preliminary result concerning the inner product of elements
of the dual frame. Identity \eqref{eq:DeuxiemeDecompoPsiStar} follows directly from
\eqref{eq:ResolutionOfIdentity}, while \eqref{eq:PSStar} can be deduced from
\cite[Corollary 3.7]{fornasier_grochenig_2005a}. For the reader's convenience,
we give a self-contained elementary proof of \eqref{eq:PSStar} in Appendix
\ref{App:DecayPSStar}.

\begin{proposition}[Expansion of the dual frame]
\label{proposition_decomposition_gskmn_star}
For all $[\bm,\bn] \in \Z^{2d}$, we have
\begin{equation}
\label{eq:DeuxiemeDecompoPsiStar}
\gskmn^\star
=
\sum_{[\bm',\bn']\in \Z^{2d}} (\gskmn^\star,\gskmnp^\star) \gskmnp.
\end{equation}
In addition, for all $\varepsilon > 0$, there exists a constant $C_\varepsilon > 0$
such that
\begin{equation}
\label{eq:PSStar}
\left | \left( \gskmn^\star, \gskmnp^\star \right ) \right|
\leq
C_\varepsilon
e^{-| [\bm, \bn] - [\bm', \bn'] |^{1-\varepsilon}}.
\end{equation}
\end{proposition}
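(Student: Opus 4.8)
The identity \eqref{eq:DeuxiemeDecompoPsiStar} requires no work: it is exactly \eqref{eq:ResolutionOfIdentity} written for the particular function $u=\gskmn^\star\in L^2(\R^d)$, whose $[\bm',\bn']$-th coefficient in the dual frame is $(\gskmn^\star,\gskmnp^\star)$.

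For \eqref{eq:PSStar} the plan is to carry out three successive reductions. First, a dilation: since $\gskmn=U_k\,\gs{1}{\bm}{\bn}$ with $U_kv(\bx)\eq k^{d/4}v(k^{1/2}\bx)$ unitary on $L^2(\R^d)$, the frame operator $T_k^\star T_k$ is unitarily conjugate to $T_1^\star T_1$; hence $\gskmn^\star=U_k\,\gs{1}{\bm}{\bn}^\star$, the inner products $(\gskmn^\star,\gskmnp^\star)$ do not depend on $k$, and I may take $k=1$. Second, a covariance argument: writing $\rho_{\bm,\bn}$ for the unitary phase-space shift attached to the lattice point $[\bm,\bn]$, so that $\gs{1}{\bm}{\bn}=\rho_{\bm,\bn}\,\gs{1}{\bzero}{\bzero}$, one checks that $T_1^\star T_1$ commutes with every $\rho_{\bm,\bn}$: each summand of $T_1^\star T_1$ is quadratic in $\rho_{\bm',\bn'}\gs{1}{\bzero}{\bzero}$, so the Heisenberg phases cancel upon re-indexing. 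Hence $S_1$ commutes with the $\rho_{\bm,\bn}$ as well, so that $\gs{1}{\bm}{\bn}^\star=\rho_{\bm,\bn}\psi^\star$ with $\psi^\star\eq\gs{1}{\bzero}{\bzero}^\star$, and $|(\gs{1}{\bm}{\bn}^\star,\gs{1}{\bm'}{\bn'}^\star)|=|(\psi^\star,\rho_{\bm-\bm',\bn-\bn'}\psi^\star)|$. Thus \eqref{eq:PSStar} is equivalent to the phase-space localisation bound $|(\psi^\star,\rho_{\bm,\bn}\psi^\star)|\leq C_\varepsilon e^{-|[\bm,\bn]|^{1-\varepsilon}}$ for all $[\bm,\bn]\in\Z^{2d}$.

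The third reduction is to pointwise sub-exponential decay of $\psi^\star$ in position \emph{and} frequency:
\begin{equation*}
|\psi^\star(\bx)|\leq C_\varepsilon e^{-|\bx|^{1-\varepsilon}},\qquad |\CF_1\psi^\star(\bxi)|\leq C_\varepsilon e^{-|\bxi|^{1-\varepsilon}}\qquad\forall\,\bx,\bxi\in\R^d.
\end{equation*}
Granting this, the localisation bound is elementary: $(\psi^\star,\rho_{\bm,\bn}\psi^\star)$ is, up to a unimodular constant, the Fourier transform of $\bx\mapsto\psi^\star(\bx)\,\overline{\psi^\star(\bx-\sqrt{\pi}\bm)}$ evaluated at $\sqrt{\pi}\bn$. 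Since $t\mapsto t^{1-\varepsilon}$ is superadditive and $|\bx|+|\bx-\sqrt{\pi}\bm|\geq\sqrt{\pi}|\bm|$, the position decay makes the $L^1$-norm of that function at most $C_\varepsilon e^{-c|\bm|^{1-\varepsilon}}$, which settles the case where $|\bm|$ dominates; passing to the Fourier side, where the function becomes a convolution of two factors each enjoying the frequency decay, settles the case where $|\bn|$ dominates; combining the two bounds by a geometric mean and then shrinking $\varepsilon$ slightly gives the claim.

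The genuine difficulty, which I expect to be the main obstacle, is the sub-exponential phase-space decay of the canonical dual window $\psi^\star=S_1\,\gs{1}{\bzero}{\bzero}$. I would derive it from a Wiener-type spectral-invariance lemma. Since $T_1^\star T_1$ commutes with all lattice phase-space shifts, a Janssen-type representation writes it as a ``twisted-convolution'' operator $\sum_\lambda c_\lambda\,\pi_\lambda$ over the adjoint lattice, whose symbol $(c_\lambda)_\lambda$ consists of inner products of phase-space-shifted Gaussians and therefore decays like $e^{-\delta|\lambda|^2}$. The inverse $S_1$ is then again of twisted-convolution type, with symbol decaying like $C_\varepsilon e^{-|\lambda|^{1-\varepsilon}}$, by the inverse-closedness in $B(L^2(\R^d))$ of the Banach $\ast$-algebra of twisted-convolution operators with sub-exponentially decaying symbol; this is essentially \cite[Corollary 3.7]{fornasier_grochenig_2005a}. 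Applying such an operator to the Gaussian $\gs{1}{\bzero}{\bzero}$, whose phase-space profile is itself Gaussian, and estimating the resulting sub-exponentially weighted lattice superposition of phase-space-shifted Gaussians (an elementary computation) then yields the two displayed bounds on $\psi^\star$. For a self-contained treatment one instead runs the Neumann series $S_1=\tfrac{2}{\alpha^2+\beta^2}\sum_{n\geq0}R^n$ with $R=\mathrm{Id}-\tfrac{2}{\alpha^2+\beta^2}T_1^\star T_1$ and $\|R\|<1$, and controls the phase-space spread of $R^n\gs{1}{\bzero}{\bzero}$ uniformly in $n$ by combining the Gaussian off-diagonal decay of the Gabor Gram matrix with a Schur test; the delicate point there is that one must exploit the self-adjointness of $T_1^\star T_1$, since a term-by-term bound on the sub-exponentially weighted norms of the iterates $R^n$ grows with $n$.
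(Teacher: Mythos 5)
Your treatment of \eqref{eq:DeuxiemeDecompoPsiStar} coincides with the paper's: it is \eqref{eq:ResolutionOfIdentity} applied to $u=\gskmn^\star$. For \eqref{eq:PSStar}, however, you take a genuinely different route. The paper never passes to the canonical dual window: it writes $S_k=\frac{2}{\alpha+\beta}\sum_{\ell\geq 0}Y^\ell$ with $Y=I-\frac{2}{\alpha+\beta}T_k^\star T_k$, expands $(\gskmn^\star,\gskmnp^\star)=\left(\frac{2}{\alpha+\beta}\right)^2\sum_{p\geq 0}(p+1)\left(\gskmn,Y^p\gskmnp\right)$, proves by induction an exponential off-diagonal bound of the form $A^p e^{-\frac{\pi}{4}|[\bm,\bn]-[\bm',\bn']|}$ on the matrix elements $(\gskmn,Y^p\gskmnp)$ using only the Gaussian Gram decay \eqref{eq:NormePS} and the triangle inequality, and then splits the $p$-sum at $P\sim\delta^{1-\varepsilon}$, $\delta=|[\bm,\bn]-[\bm',\bn']|$, using $\|Y\|\leq\gamma<1$ for the tail. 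Your route --- dilation to $k=1$, covariance of the frame operator under lattice time-frequency shifts, reduction to sub-exponential phase-space decay of the canonical window $\psi^\star$, and inverse-closedness of the twisted-convolution algebra --- is sound and more conceptual: it produces the decay of $\psi^\star$ itself as a byproduct and situates the result in the localized-frames theory. But it outsources the entire difficulty to \cite[Corollary 3.7]{fornasier_grochenig_2005a} (or Gr\"ochenig--Leinert), which is exactly the deduction the paper mentions in the sentence preceding the proposition and then deliberately avoids in order to remain self-contained; note also that with that citation in hand one can bound the Gramian of the dual frame directly, without the detour through pointwise decay of $\psi^\star$ and the short-time Fourier transform computation.

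One caveat on your self-contained fallback: you cannot control the phase-space spread of $R^n\gs{1}{\bzero}{\bzero}$ \emph{uniformly} in $n$ --- as you yourself observe, any term-by-term weighted bound on the iterates grows geometrically in $n$. The resolution, and the reason the final decay is $e^{-\delta^{1-\varepsilon}}$ rather than $e^{-c\delta}$, is precisely the paper's balancing step: accept the growing bound $A^n e^{-c\delta}$ for $n\leq\delta^{1-\varepsilon}$ and use the operator-norm bound $\gamma^n$ for $n>\delta^{1-\varepsilon}$. Without this cutoff the fallback argument does not close.
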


We are now ready to establish our result concerning the coefficients' decay.

\begin{proposition}[Dual coefficients' decay]
\label{proposition_decay_star}
For all $p \in \N$ and $u \in \HH^p(\R^d)$, we have
\begin{equation}
\label{eq_decay_star}
\sum_{[\bm, \bn]\in \Z^{2d}}
\left (|\xxkm|^2 + |\xikn|^2\right )^p
|(u,\gskmn^\star)|^2
\leq
C_p \|u\|^2_{\HH^p_k(\R^d)}.
\end{equation}
\end{proposition}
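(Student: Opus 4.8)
The plan is to combine the two facts just recorded — the reconstruction \eqref{eq:DeuxiemeDecompoPsiStar} of each dual vector $\gskmn^\star$ from the primal frame, and the stretched-exponential off-diagonal decay \eqref{eq:PSStar} of the dual Gram matrix — so as to reduce everything to Proposition~\ref{proposition_decay}. Fixing a value $\varepsilon \in (0,1)$ once and for all and pairing \eqref{eq:DeuxiemeDecompoPsiStar} with $u$ (the series converges in $L^2$, so the pairing is legitimate, and it is in fact absolutely summable by \eqref{eq:PSStar}), we get
\[
|(u,\gskmn^\star)|
\le
\sum_{[\bm',\bn'] \in \Z^{2d}} |(\gskmn^\star,\gskmnp^\star)|\,|(u,\gskmnp)|
\le
C_\varepsilon \sum_{[\bm',\bn'] \in \Z^{2d}} e^{-|[\bm,\bn]-[\bm',\bn']|^{1-\varepsilon}}\,|(u,\gskmnp)|.
\]
Thus $|(u,\gskmn^\star)|$ is controlled by a discrete convolution of the primal coefficients $|(u,\gskmnp)|$ against a rapidly decaying kernel, and the rest is an exercise in Young's inequality once the polynomial weight $(|\xxkm|^2+|\xikn|^2)^p$ has been brought in.

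To insert the weight, observe that $[\xxkm,\xikn]-[\xxkmp,\xiknp] = \sqrt{\pi/k}\,([\bm,\bn]-[\bm',\bn'])$, so that, using $k \ge 1$, the triangle inequality in $\R^{2d}$ yields
\[
\big(|\xxkm|^2+|\xikn|^2\big)^{1/2}
\le
\big(|\xxkmp|^2+|\xiknp|^2\big)^{1/2} + \sqrt{\pi}\,|[\bm,\bn]-[\bm',\bn']|.
\]
Raising this to the power $p$, using $(a+b)^p \le C_p(a^p+b^p)$ and then $a^p+b^p \le (1+a^p)(1+b^p)$ for $a,b\ge 0$, multiplying by $|(u,\gskmn^\star)|$, and invoking the bound just established, we arrive at a pointwise estimate of the form
\[
\big(|\xxkm|^2+|\xikn|^2\big)^{p/2}|(u,\gskmn^\star)|
\le
C_{p,\varepsilon}\sum_{[\bm',\bn'] \in \Z^{2d}} \kappa_p\big([\bm,\bn]-[\bm',\bn']\big)\,g_{\bm',\bn'},
\]
where $\kappa_p(\bz) \eq (1+|\bz|^p)\,e^{-|\bz|^{1-\varepsilon}}$ and $g_{\bm',\bn'} \eq \big(1 + (|\xxkmp|^2+|\xiknp|^2)^{p/2}\big)\,|(u,\gskmnp)|$. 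Young's inequality for convolution on $\Z^{2d}$ then gives
\[
\Big(\sum_{[\bm,\bn] \in \Z^{2d}} \big(|\xxkm|^2+|\xikn|^2\big)^p |(u,\gskmn^\star)|^2\Big)^{1/2}
\le
C_{p,\varepsilon}\,\|\kappa_p\|_{\ell^1(\Z^{2d})}\,\|g\|_{\ell^2(\Z^{2d})}.
\]

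Finally I would bound the two factors on the right. Since $1-\varepsilon > 0$, the kernel $e^{-|\bz|^{1-\varepsilon}}$ decays faster than any polynomial, so $\|\kappa_p\|_{\ell^1(\Z^{2d})}$ is finite and depends only on $p$ and $d$ (recall $\varepsilon$ is fixed). For $\|g\|_{\ell^2}$, expanding the square and using $(1+a)^2 \le 2+2a^2$ bounds $\|g\|_{\ell^2}^2$ by $C_p$ times
\[
\sum_{[\bm',\bn']}|(u,\gskmnp)|^2 + \sum_{[\bm',\bn']}\big(|\xxkmp|^2+|\xiknp|^2\big)^p|(u,\gskmnp)|^2;
\]
the first sum is $\le C\,\|u\|_{L^2(\R^d)}^2 \le C\,\|u\|_{\HH^p_k(\R^d)}^2$ by the frame inequality \eqref{eq_frame}, and the second is $\le C_p\,\|u\|^2_{\HH^p_k(\R^d)}$ by Proposition~\ref{proposition_decay}. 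Combining the displays yields \eqref{eq_decay_star}.

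The argument is the standard Schur-test/Young mechanism for converting an off-diagonal decay estimate on the dual Gram matrix into a mapping property, so I do not expect a genuine obstacle; the only points needing a little care are the geometric bookkeeping relating the lattice distance $|[\bm,\bn]-[\bm',\bn']|$ to the phase-space distance (harmless since $k \ge 1$), and checking that the decay in \eqref{eq:PSStar}, being stretched-exponential rather than merely polynomial, is strong enough to keep $\kappa_p \in \ell^1(\Z^{2d})$ for every $p$ — which is exactly why fixing any $\varepsilon \in (0,1)$ suffices.
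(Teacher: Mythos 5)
Your argument is correct and follows essentially the same route as the paper's: both expand $(u,\gskmn^\star)$ via \eqref{eq:DeuxiemeDecompoPsiStar}, use the stretched-exponential decay \eqref{eq:PSStar} of the dual Gram matrix, transfer the polynomial weight onto the primal coefficients by bounding the phase-space increment by $\sqrt{\pi}\,|[\bm,\bn]-[\bm',\bn']|$, and conclude with Proposition~\ref{proposition_decay} and the frame inequality. The only (cosmetic) difference is that you package the summation step as Young's convolution inequality on $\Z^{2d}$, whereas the paper runs the equivalent weighted Cauchy--Schwarz/Schur-test computation by hand.
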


\begin{proof}
To lighten notations, let us write $\bz^{k, \bm, \bn} :=\xxkm + i \xikn$.
We have
\begin{align*}
&\sum_{[\bm, \bn]\in \Z^{2d}}
\left (|\xxkm|^2 + |\xikn|^2\right )^p
|(u,\gskmn^\star)|^2 \\
&= \sum_{[\bm, \bn]\in \Z^{2d}} 
|\bz^{k, \bm, \bn}|^{2 p} 
\left| \sum_{[\bm', \bn']\in \Z^{2d}}  (u,  \gskmnp) ( \gskmn^\star, \gskmnp^\star) \right|^2~~~~\text{ thanks to (\ref{eq:DeuxiemeDecompoPsiStar})}\\
&= \sum_{[\bm, \bn]\in \Z^{2d}} 
\left| \sum_{[\bm', \bn']\in \Z^{2d}} \left (1+ |\bz^{k, \bm', \bn'}|^p\right)  \frac{|\bz^{k, \bm, \bn}|^p}{1+ |\bz^{k, \bm', \bn'}|^p}  (u,  \gskmnp)  ( \gskmn^\star, \gskmnp^\star) \right|^2\\
&\leq \sum_{[\bm, \bn]\in \Z^{2d}} 
\left( \sum_{[\bm', \bn']\in \Z^{2d}} \left|\left (1+ |\bz^{k, \bm', \bn'}|^p\right)  (u,  \gskmnp)\right|^2 |( \gskmn^\star, \gskmnp^\star)| \right) \\
&\times \left( \sum_{[\bm', \bn']\in \Z^{2d}}  \left| \frac{|\bz^{k, \bm, \bn}|^p}{1+ |\bz^{k, \bm', \bn'}|^p} \right|^2 |( \gskmn^\star, \gskmnp^\star)|  \right), 
\end{align*}
thanks to Cauchy-Schwarz inequality. Let us bound the second factor using \eqref{eq:PSStar}.
For every $[\bm,\bn]\in \Z^{2d}$, we have
\begin{align*}
&
\sum_{[\bm', \bn']\in \Z^{2d}}
\left| \frac{|\bz^{k, \bm, \bn}|^p}{1+ |\bz^{k, \bm', \bn'}|^p} \right|^2
|( \gskmn^\star, \gskmnp^\star)|
\\
&\leq
C_p
\sum_{[\bm', \bn']\in \Z^{2d}}
\left|
\frac{|\bz^{k, \bm', \bn'}|^p + |\bz^{k, \bm, \bn}-\bz^{k, \bm', \bn'}|^p}%
{1+ |\bz^{k, \bm', \bn'}|^p}
\right|^2
e^{-| [\bm, \bn] - [\bm', \bn'] |^{1/2}}
\\
&\leq
C_p
\sum_{[\bm', \bn']\in \Z^{2d}}
\left| 1+  |\bz^{k, \bm, \bn}-\bz^{k, \bm', \bn'}|^p \right|^2
e^{-| [\bm, \bn] - [\bm', \bn'] |^{1/2}}
\\
&\leq
C_p.
\end{align*}

Therefore, we have
\begin{align*}
&
\sum_{[\bm, \bn]\in \Z^{2d}}
\left (|\xxkm|^2 + |\xikn|^2\right )^p |(u,\gskmn^\star)|^2
\\
&\leq
C_p
\sum_{[\bm, \bn]\in \Z^{2d}} \sum_{[\bm', \bn']\in \Z^{2d}}
\left |
\left (1+ |\bz^{k, \bm', \bn'}|^p\right )  (u,\gskmnp)
\right |^2
e^{-|[\bm,\bn] - [\bm',\bn']|^{1/2}}
\\
&\leq
C_p
\sum_{[\bm', \bn']\in \Z^{2d}}
\left |\left (1+ |\bz^{k, \bm', \bn'}|^p\right ) (u,\gskmnp)\right |^2,
\end{align*}
and \eqref{eq_decay_star} follows from \eqref{eq_decay}.
\end{proof}



\section{Proof of Theorem \ref{theorem_approximation}}
\label{Sec:Proof} 

\subsection{Approximation in $L^2(\R^d)$}

We start by proving Theorem \ref{theorem_approximation} in the simplest case, when $p=0$.

\begin{lemma}[$L^2(\R^d)$ approximation]
\label{Lem:ApproxL2}
For all $u \in \HH^r(\R^d)$, we have
\begin{equation}
\label{eq_L2_error_dual}
\|u-\Pi_D u\|_{L^2(\R^d)}
\leq
C_r D^{-r} \|u\|_{\HH^r_k(\R^d)}.
\end{equation}
\end{lemma}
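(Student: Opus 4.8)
The plan is to combine the resolution of identity \eqref{eq:ResolutionOfIdentity} with Proposition \ref{proposition_decay_star}. Since $u \in \HH^r(\R^d) \subset L^2(\R^d)$, formula \eqref{eq:ResolutionOfIdentity} applies and, recalling \eqref{eq_definition_pi}, gives
\begin{equation*}
u - \Pi_D u = \sum_{|(\xxkm,\xikn)| > D} (u,\gskmn^\star) \gskmn
\end{equation*}
with convergence in $L^2(\R^d)$.

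First I would use the upper bound in \eqref{eq_frame}: it says precisely that the analysis operator satisfies $\|T_k v\|_{\ell^2(\Z^{2d})} \le \beta \|v\|_{L^2(\R^d)}$, hence its adjoint, the synthesis operator $T_k^\star$, also has norm at most $\beta$, i.e.
\begin{equation*}
\Big\| \sum_{[\bm,\bn] \in \Z^{2d}} U_{\bm,\bn} \gskmn \Big\|_{L^2(\R^d)} \le \beta \|U\|_{\ell^2(\Z^{2d})} \qquad \forall U \in \ell^2(\Z^{2d}).
\end{equation*}
Applying this to the sequence $U_{\bm,\bn} \eq (u,\gskmn^\star)$ if $|(\xxkm,\xikn)| > D$ and $U_{\bm,\bn} \eq 0$ otherwise, I get
\begin{equation*}
\|u - \Pi_D u\|_{L^2(\R^d)}^2 \le \beta^2 \sum_{|(\xxkm,\xikn)| > D} |(u,\gskmn^\star)|^2.
\end{equation*}
Then, on the truncated index set one has $\big(|\xxkm|^2 + |\xikn|^2\big)^r = |(\xxkm,\xikn)|^{2r} > D^{2r}$, so that each tail term is at most $D^{-2r}\big(|\xxkm|^2 + |\xikn|^2\big)^r |(u,\gskmn^\star)|^2$, which yields
\begin{equation*}
\|u - \Pi_D u\|_{L^2(\R^d)}^2 \le \beta^2 D^{-2r} \sum_{[\bm,\bn] \in \Z^{2d}} \big(|\xxkm|^2 + |\xikn|^2\big)^r |(u,\gskmn^\star)|^2.
\end{equation*}

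To finish, I would invoke Proposition \ref{proposition_decay_star} with $p = r$ to bound the remaining sum by $C_r \|u\|_{\HH^r_k(\R^d)}^2$; since $\beta$ depends only on $d$, taking square roots gives \eqref{eq_L2_error_dual}. I do not anticipate a real obstacle: the proof is just the assembly of the Bessel bound and of the already-established Proposition \ref{proposition_decay_star}. The only minor points to mind are that $\HH^r(\R^d) \subset L^2(\R^d)$, so that \eqref{eq:ResolutionOfIdentity} is legitimate, and that the truncated series converges in $L^2(\R^d)$ — which follows from the Bessel bound applied to $(u,\gskmn^\star) \in \ell^2(\Z^{2d})$ (the latter being in $\ell^2$ by Proposition \ref{proposition_decay_star} with $p = 0$, or directly because the dual family is a frame).
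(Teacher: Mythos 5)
Your proof is correct and follows essentially the same route as the paper: decompose the error as the tail of the expansion \eqref{eq:ResolutionOfIdentity}, control its $L^2$ norm via the synthesis (Bessel) bound coming from \eqref{eq_frame}, insert the weight $|(\xxkm,\xikn)|^{2r} > D^{2r}$ on the tail, and conclude with Proposition \ref{proposition_decay_star}. Your explicit tracking of the constant $\beta^2$ and the justification of the synthesis bound are slightly more careful than the paper's terse ``the frame property ensures,'' but the argument is the same.
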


\begin{proof}
By definition of $\Pi_D$, we have
\begin{equation*}
\CE
\eq
u-\Pi_D u
=
\sum_{\substack{[\bm,\bn] \in \Z^{2d} \\ |(\xxkm,\xikn)| > D}}
(u,\gskmn^\star) \gskmn,
\end{equation*}
and the frame property ensures that
\begin{align*}
\|\CE\|_{L^2(\R^d)}^2
&\leq
\sum_{\substack{[\bm,\bn] \in \Z^{2d} \\ |(\xxkm,\xikn)| > D}}
|(u,\gskmn^\star)|^2
\\
&\leq
D^{-2r}
\sum_{\substack{[\bm,\bn] \in \Z^{2d} \\ |(\xxkm,\xikn)| > D}}
|(\xxkm,\xikn)|^{2r} |(u,\gskmn^\star)|^2,
\end{align*}
and \eqref{eq_L2_error_dual} follows from Proposition \ref{proposition_decay_star}.
\end{proof}

\subsection{Technical preliminary results}

To prove Theorem \ref{theorem_approximation}, that is, to generalize \eqref{eq_L2_error_dual}
to the case where $p > 0$, we must first show several preliminary technical results.

\begin{lemma}[Smallness far from the origin]
For all $\ba \in \N^d$, we have
\begin{equation}
\label{eq_estimate_far}
|\partial^{\ba}(\Pi_D u)(\bx)|
\leq
C_{\ba}
k^{d/4}
\left(1+k^{1/2}D\right)^{d/2}
(k^{1/2}|\bx|)^{[\ba]}
e^{-\frac{k}{8} |\bx|^2} \|u\|_{L^2(\R^d)}
\end{equation}
for all $\bx \in \R^d$ with $|\bx| \geq 2D$.
In addition, for all $q \geq 0$, we have
\begin{equation}
\label{eq_L2_norm_far}
k^{q/2}
\left\||\bx|^q \partial^{\ba}(\Pi_D u)\right\|_{L^2(\{|\bx| > 2D\})}
\leq
C_{p,q} k^{[\ba]/2} e^{-\frac{k D^2}{16}} \|u\|_{L^2(\R^d)}.
\end{equation}
\end{lemma}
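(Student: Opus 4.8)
The plan is to work directly with the finite sum defining $\Pi_D u$ and exploit the fact that every Gaussian state $\gskmn$ appearing in it is centered at a point $\xxkm$ with $|\xxkm| \leq D$, so that if $|\bx| \geq 2D$ then $|\bx - \xxkm| \geq |\bx|/2$; this separation is what produces the Gaussian decay. First I would record the pointwise bound on derivatives of a single Gaussian state: since $\gskmn(\bx) = (k/\pi)^{d/4} e^{-\frac{k}{2}|\bx-\xxkm|^2} e^{ik(\bx-\xxkm)\cdot\xikn}$, applying $\partial^{\ba}$ brings down a polynomial in $k^{1/2}(\bx - \xxkm)$ and $k^{1/2}\xikn$ of total degree $[\ba]$; however, in the regime $|\bx|\geq 2D \geq 2|\xxkm|$ and using $|\xikn| \leq D \leq |\bx|/2$ on the summation set, one can absorb these polynomial factors against a fraction of the Gaussian, say replacing $e^{-\frac{k}{2}|\bx-\xxkm|^2}$ by $C_{\ba}(k^{1/2}|\bx|)^{[\ba]} e^{-\frac{k}{4}|\bx-\xxkm|^2}$, and then further by $C_{\ba}(k^{1/2}|\bx|)^{[\ba]} e^{-\frac{k}{8}|\bx|^2}$ using $|\bx - \xxkm|^2 \geq |\bx|^2/4$. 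Summing $|(u,\gskmn^\star)|$ over the lattice points in the summation set, one uses Cauchy--Schwarz together with the frame bound for the dual frame to get $\sum |(u,\gskmn^\star)| \leq (\#\{\text{points}\})^{1/2}\,\beta\,\|u\|_{L^2}$, and the number of lattice points $[\bm,\bn]$ with $|(\xxkm,\xikn)|\leq D$ is $O((1+k^{1/2}D)^{2d})$; combined with the $(k/\pi)^{d/4}$ normalization this yields the factor $k^{d/4}(1+k^{1/2}D)^{d/2}$ in \eqref{eq_estimate_far}. Care is needed that the polynomial-absorption constant and the counting are uniform in $k$, which holds because $k^{1/2}D > \sqrt\pi$ guarantees at least one point and the lattice spacing is $\sqrt{\pi/k}$.

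For the second estimate \eqref{eq_L2_norm_far}, I would simply integrate the square of \eqref{eq_estimate_far} — with the extra weight $|\bx|^{2q}$ — over the region $\{|\bx| > 2D\}$. The integrand is bounded by $C_{\ba,q}\, k^{d/2}(1+k^{1/2}D)^{d}\, k^{[\ba]+q}|\bx|^{2[\ba]+2q} e^{-\frac{k}{4}|\bx|^2}\|u\|_{L^2}^2$ (pairing two copies of the $e^{-k|\bx|^2/8}$ factors). Passing to polar coordinates and substituting $t = k^{1/2}|\bx|$, the integral over $|\bx| > 2D$ becomes, up to dimensional constants, $k^{-d/2}\int_{2k^{1/2}D}^{\infty} t^{2[\ba]+2q+d-1} e^{-t^2/4}\,\mathrm{d}t$, and the tail of this Gaussian-type integral starting at $2k^{1/2}D$ is bounded by $C_{p,q}\, e^{-k D^2}$ times a polynomial in $k^{1/2}D$ — which in turn is dominated by, say, $C_{p,q}\, e^{-\frac{k D^2}{8}}$ (polynomials are swallowed by a fraction of the exponent). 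Collecting the powers of $k$: the $k^{-d/2}$ from the substitution cancels the $k^{d/2}$ prefactor, the $(1+k^{1/2}D)^d$ is absorbed into the exponential tail, and one is left with $k^{[\ba]/2}$ times the stated exponential $e^{-kD^2/16}$ (the discrepancy between the $1/8$ I get and the $1/16$ in the statement is harmless slack, which I would choose the constants to respect). Taking square roots gives \eqref{eq_L2_norm_far}.

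The only mildly delicate point — the "main obstacle" — is making every constant genuinely independent of $k$: one must check that the cardinality estimate $\#\{[\bm,\bn] : |(\xxkm,\xikn)| \leq D\} \leq C(1 + k^{1/2}D)^{2d}$ and all the polynomial-versus-exponential absorptions are uniform, which they are because after the rescaling $t = k^{1/2}|\bx|$ (and correspondingly $[\bm,\bn] \mapsto \sqrt{\pi/k}\,[\bm,\bn]$) everything is expressed in terms of the single dimensionless parameter $k^{1/2}D$, and the hypothesis $k^{1/2}D > \sqrt\pi$ keeps that parameter bounded away from $0$. Everything else is routine Gaussian calculus, and in particular the polynomial prefactors $(k^{1/2}|\bx|)^{[\ba]}$ in \eqref{eq_estimate_far} are exactly what survives the differentiation because the phase $e^{ik(\bx-\xxkm)\cdot\xikn}$ contributes factors of $k\xikn = k^{1/2}\cdot k^{1/2}\xikn$ with $k^{1/2}\xikn$ bounded by $k^{1/2}D \lesssim k^{1/2}|\bx|$ on the relevant set.
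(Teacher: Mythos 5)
Your proposal is correct and follows essentially the same route as the paper's proof: the separation $|\bx-\xxkm|\geq|\bx|/2$ to extract $e^{-k|\bx|^2/8}$, the differentiation lemma producing a polynomial absorbed via $|\bx-\xxkm+i\xikn|\lesssim|\bx|$ on the truncated summation set, Cauchy--Schwarz with the frame bound and a cardinality count, and then a radial integration with the substitution $t=k^{1/2}|\bx|$ for \eqref{eq_L2_norm_far}. The one small quibble is bookkeeping: your (correct) count of $O((1+k^{1/2}D)^{2d})$ lattice points yields $(1+k^{1/2}D)^{d}$ after Cauchy--Schwarz rather than the $(1+k^{1/2}D)^{d/2}$ you claim, but since this polynomial factor is swallowed by the Gaussian tail in \eqref{eq_L2_norm_far}, nothing downstream is affected.
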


\begin{proof}
If $|\bx|\geq 2D$ and $|\xxkm| \leq D$, we have $|\bx-\xxkm|^2 \geq \frac{|\bx|^2}{4}$,
so that $e^{-\frac{k}{2} |\bx-\xxkm|^2} \leq e^{- \frac{k}{8} |\bx|^2}$. It follows from
\eqref{eq_diff_gskmn} that there exists a polynomial $Q_{\ba}$ of total degree $[\ba]$ such that
\begin{equation*}
(\partial^{\ba} \gskmn)(\bx)
= k^{[\ba]/2}
Q_{\ba}(k^{1/2}\left(\bx-\xxkm+i\xikn) \right) \gskmn(\bx)
\end{equation*}
so that
\begin{equation*}
|\partial^{\ba}(\Pi_D u)(\bx)|
\leq
C k^{d/4} \sum_{\underset{|(\xxkm,\xikn)| \leq D}{[\bm,\bn] \in \Z^{2d}}}
| (u,\gskmn^\star)||Q_{\ba}(k^{1/2}(\bx-\xxkm+i\xikn))| e^{-\frac{k}{8} |\bx|^2}.
\end{equation*}

We then observe that since $|\bx| \geq 2D$ and $|x_j^{k,m}|+|\xi_j^{k,n}| \leq CD$
for all terms in the sum, we have
\begin{equation*}
|\bx-\xxkm+i\xikn| \leq C|\bx|.
\end{equation*}
As a result, we have
\begin{equation*}
|\partial^{\ba}(\Pi_D u)(\bx)|
\leq
C_{\ba} k^{d/4}
\left (
\sum_{\underset{|(\xxkm,\xikn)| \leq D}{[\bm,\bn] \in \Z^{2d}}}
| (u,\gskmn^\star)|
\right )
(k^{1/2}|\bx|)^{[\ba]} e^{-\frac{k}{8} |\bx|^2}
\end{equation*}
and \eqref{eq_estimate_far} follows from \eqref{eq_frame} and from the Cauchy-Schwarz inequality,
as the sum is over a set of cardinal smaller than $C\left(1+k^{1/2}D\right)^d$.

Next, we set $p = [\ba]$, and we consider
\begin{equation*}
I_q
\eq
\int_{|\bx| \geq 2D}
\left (|\bx|^{p+q} e^{- \frac{k}{8} |\bx|^2}\right )^2
\mathrm{d}\bx
=
C \int_{2D}^{+\infty} r^{d-1+2(p+q)} e^{- \frac{k}{4} |\bx|^2}\mathrm{d}r,
\end{equation*}
using radial coordinates. Introducing $s = \sqrt{k}r$, we get
\begin{align*}
I_q
&=
\int_{2k^{1/2}D}^{+\infty}
(k^{-1/2} s)^{d-1+2(p+q)}
e^{-\frac{s^2}{4}} k^{-1/2} \mathrm{d}s
\\
&=
k^{-(d+2(p+q))/2}
\int_{2k^{1/2}D}^{+\infty} s^{d-1+2(p+q)}  e^{-\frac{s^2}{4}} \sqrt{s} \mathrm{d}s
\\
&\leq
k^{-(d+2(p+q))/2} e^{-\frac{k D^2}{2}}
\int_{0}^{+\infty} s^{d-1+2(p+q)}   e^{-\frac{s^2}{8}} \mathrm{d}s
\\
&=
C_p k^{-(d+2(p+q))/2} e^{-\frac{k D^2}{2}}.
\end{align*}
It then follows from \eqref{eq_estimate_far} that
\begin{align*}
\||\bx|^q \partial^{\ba}(\Pi_D u)\|_{L^2(\{|\bx| > 2D\})}
&\leq
C k^{d/4} (1+k^{1/2}D)^{d/2} k^p\sqrt{I_q} \|u\|_{L^2(\R^d)}
\\
&\leq
C_q
k^{d/4} (1+k^{1/2}D)^{d/2}
k^p
k^{-(d+2(p+q))/4} e^{-\frac{k D^2}{8}}
\|u\|_{L^2(\R^d)}
\\
&=
C_q
k^{-q/2} k^{p/2}(1+k^{1/2}D)^{d/2} e^{-\frac{k D^2}{8}}
\|u\|_{L^2(\R^d)}
\\
&\leq
C_q k^{-q/2} k^{p/2} e^{-\frac{k D^2}{16}}
\|u\|_{L^2(\R^d)},
\end{align*}
and \eqref{eq_L2_norm_far} follows.
\end{proof}

We then prove a preliminary result about the decay properties of $u - \Pi_D u$.

\begin{corollary}
\label{corollary_error_xL2}
For any $p,q,r \in \N$, any $\ba \in \N^d$ with $q+[\ba] \leq p$
and for any $u \in \HH^{p+r}(\R^d)$, we have
\begin{multline}
\label{eq_error_xL2}
k^{-[\ba]}
\||\bx|^q \partial^{\ba}\left(u-\Pi_D u\right)\|_{L^2(\R^d)}
\leq
\\
C_{\ba,q,r} D^q \left (
k^{-[\ba]}
\left\|\partial^{\ba}(u-\Pi_D u) \right\|_{L^2(\R^d)}
+
D^{-(p+r)} \|u\|_{\HH^{p+r}_k(\R^d)}
\right ).
\end{multline}

\end{corollary}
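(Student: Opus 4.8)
The plan is to split the weighted norm of $u - \Pi_D u$ into a contribution from the ball $\{|\bx| \leq 2D\}$, where the weight $|\bx|^q$ is harmless since it is bounded by $(2D)^q$, and a contribution from the complement $\{|\bx| > 2D\}$, which we control via the already-established exponential smallness estimate \eqref{eq_L2_norm_far} together with the crude bound $\|u\|_{L^2(\R^d)} \le \|u\|_{\HH^{p+r}_k(\R^d)}$. More precisely, I would write
\begin{equation*}
\||\bx|^q \partial^{\ba}(u - \Pi_D u)\|_{L^2(\R^d)}^2
=
\||\bx|^q \partial^{\ba}(u - \Pi_D u)\|_{L^2(\{|\bx| \le 2D\})}^2
+
\||\bx|^q \partial^{\ba}(u - \Pi_D u)\|_{L^2(\{|\bx| > 2D\})}^2.
\end{equation*}
On the inner region, $|\bx|^q \leq (2D)^q$ pointwise, so the first term is at most $(2D)^{2q}\|\partial^{\ba}(u-\Pi_D u)\|_{L^2(\R^d)}^2$; after multiplying by $k^{-2[\ba]}$ this produces exactly the first term on the right-hand side of \eqref{eq_error_xL2}, up to the constant $C_{\ba,q,r}$ and the factor $D^{2q}$.

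For the outer region I would use the triangle inequality $\||\bx|^q \partial^{\ba}(u-\Pi_D u)\|_{L^2(\{|\bx|>2D\})} \le \||\bx|^q \partial^{\ba} u\|_{L^2(\{|\bx|>2D\})} + \||\bx|^q \partial^{\ba} \Pi_D u\|_{L^2(\{|\bx|>2D\})}$. The second piece is directly bounded by \eqref{eq_L2_norm_far}, which gives $k^{-[\ba]}$ times $C_{p,q}e^{-kD^2/16}\|u\|_{L^2(\R^d)}$; since $e^{-kD^2/16}$ decays faster than any power of $D$ (using $k \ge 1$), this is certainly $\leq C_{\ba,q,r} D^{-(p+r)}\|u\|_{\HH^{p+r}_k(\R^d)}$ for, say, $D \geq 1$, and the case of small $D$ is excluded or handled by adjusting constants. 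The first piece, $\||\bx|^q \partial^{\ba} u\|_{L^2(\{|\bx|>2D\})}$, is the genuinely new observation: on $\{|\bx| > 2D\}$ one has $|\bx|^q = |\bx|^{q} \le D^{-r}|\bx|^{q+r} \le D^{q-p}|\bx|^{p+r}$ — more carefully, $|\bx|^{2q} \leq (2D)^{2q} \big(|\bx|/(2D)\big)^{2q} \le (2D)^{2q}\big(|\bx|/(2D)\big)^{2(p-[\ba]+r)}$ since $|\bx|/(2D) > 1$ and $q \le p - [\ba] \le p - [\ba] + r$, hence $|\bx|^{2q} \le C_{q,p,r} D^{2q} D^{-2(p-[\ba]+r)} |\bx|^{2(p-[\ba]+r)}$. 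Therefore
\begin{equation*}
k^{-[\ba]}\||\bx|^q \partial^{\ba} u\|_{L^2(\{|\bx|>2D\})}
\le
C_{q,p,r} D^{q} D^{-(p+r)} \, k^{-[\ba]} \, D^{[\ba]} \, \||\bx|^{p-[\ba]+r}\partial^{\ba} u\|_{L^2(\R^d)},
\end{equation*}
and since $D^{[\ba]} k^{-[\ba]} \le D^{[\ba]}$ is absorbed... actually one must be a little careful that $k$ and $D$ enter correctly: the weighted norm $\|u\|_{\HH^{p+r}_k}$ contains precisely the terms $k^{-[\ba]}\||\bx|^{s}\partial^{\ba}u\|_{L^2}$ with $[\ba] + s \le p+r$, and here $[\ba] + (p - [\ba] + r) = p + r$, so $k^{-[\ba]}\||\bx|^{p-[\ba]+r}\partial^{\ba}u\|_{L^2(\R^d)} \le \|u\|_{\HH^{p+r}_k(\R^d)}$; the leftover factor is $D^{q-(p+r)} D^{[\ba]} \le D^{q} D^{-(p+r)} D^{[\ba]}$, and since $[\ba] \le p$ one can bound $D^{[\ba]} \le D^{p}$ and fold it against $D^{-(p+r)}$... this needs the hypothesis $q + [\ba] \le p$ so that the net power of $D$ coming out, after collecting, is no worse than $D^{q} D^{-r}$, which is dominated by $D^{q}D^{-(p+r)}$ only when... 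Here lies the one point requiring care.

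The main obstacle, then, is purely bookkeeping: making sure the powers of $D$ and $k$ line up so that the outer-region estimate really fits under $C_{\ba,q,r} D^{q} D^{-(p+r)} \|u\|_{\HH^{p+r}_k(\R^d)}$ as claimed in \eqref{eq_error_xL2}. The safe route is to choose the exponent on $|\bx|$ on the outer region as large as the weighted norm allows, namely $p + r - [\ba]$ (so that together with $\partial^{\ba}$ it sits at the top order $p+r$ of $\HH^{p+r}_k$), use $|\bx|/(2D) > 1$ raised to the difference $(p+r-[\ba]) - q \ge 0$ (which holds because $q + [\ba] \le p \le p + r$), and track that the weight $k^{-[\ba]}$ is exactly the one appearing in the definition of $\|\cdot\|_{\HH^{p+r}_k}$. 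No estimate more sophisticated than the two already-proven displays \eqref{eq_L2_norm_far} and the definition of the weighted Sobolev norm is needed; everything else is the triangle inequality, the trivial bound $|\bx| \le 2D$ on the inner ball, and the monotonicity $e^{-kD^2/16} \le C_{r} D^{-r}$ for $D$ bounded away from $0$ (which is guaranteed under the standing assumption $k^{1/2}D > \sqrt\pi$, up to enlarging constants).
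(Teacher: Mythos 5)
Your decomposition is exactly the paper's: split at $|\bx|=2D$, bound $|\bx|^q\le(2D)^q$ on the inner ball, and on the outer region use the triangle inequality, \eqref{eq_L2_norm_far} with $e^{-kD^2/16}\le C_{p,r}D^{-(p+r)}$ for the $\Pi_D u$ piece, and a power-raising trick for the $u$ piece. The ``one point requiring care'' that you flag is genuine, and you are right not to wave it away: choosing the largest admissible exponent $p+r-[\ba]$ (so that the term $k^{-[\ba]}\||\bx|^{p+r-[\ba]}\partial^{\ba}u\|_{L^2(\R^d)}$ really is a summand of $\|u\|_{\HH^{p+r}_k(\R^d)}$) yields
\begin{equation*}
k^{-[\ba]}\||\bx|^q\partial^{\ba}u\|_{L^2(\{|\bx|>2D\})}
\le
C\,D^{\,q+[\ba]-(p+r)}\|u\|_{\HH^{p+r}_k(\R^d)},
\end{equation*}
i.e.\ an extra factor $D^{[\ba]}$ compared with the second right-hand-side term of \eqref{eq_error_xL2}. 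The paper's own proof does not escape this: at the corresponding step it raises the weight all the way to $|\bx|^{p+r}$ and then invokes $\||\bx|^{p+r}\partial^{\ba}u\|_{L^2(\R^d)}\le k^{[\ba]}\|u\|_{\HH^{p+r}_k(\R^d)}$, which is not a term of the weighted norm when $[\ba]>0$ and is false in general (test on a modulated bump supported near $|\bx|\approx 2D$ with frequency $k D$). So your ``safe route'' proves the corollary with $D^{q+[\ba]}$ in place of $D^q$ on the second term, which is the version that is actually justified; and this weaker version is all that is used in the proof of Theorem \ref{theorem_approximation}, where the resulting power $D^{q+[\ba]-(p+r)}\le D^{-r}$ is controlled precisely by the hypothesis $q+[\ba]\le p$. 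In short: your proof is correct modulo restating the corollary with the harmless extra $D^{[\ba]}$, and the discrepancy you detected is an inaccuracy in the paper rather than a gap in your argument.
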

\begin{proof}
We first write that
\begin{align*}
&\left\||\bx|^q \partial^{\ba} \left(u-\Pi_D u\right)\right\|_{L^2(\R^d)}
\\
&=
\left\||\bx|^q \partial^{\ba} \left(u-\Pi_D u\right)\right\|_{L^2(\{|\bx| < 2D\})}
+
\left\||\bx|^q \partial^{\ba} \left(u-\Pi_D u\right)\right\|_{L^2(\{|\bx| > 2D\})}
\\
&\leq
(2D)^q \left\| \partial^{\ba}( u-\Pi_D u) \right\|_{L^2(\R^d)}
+
\||\bx|^q \partial^{\ba} u\|_{L^2(\{|\bx| > 2D\})}
+
\||\bx|^q \partial^{\ba}( \Pi_D u)\|_{L^2(\{|\bx| > 2D\})}.
\end{align*}
To deal with the second term, we note that
\begin{align*}
\||\bx|^q \partial^{\ba} u\|_{L^2(\{|\bx| > 2D\})}
\leq
(2D)^{q-p-r} \||\bx|^{q-q+p}+r \partial^{\ba} u\|_{L^2(\R^d)}
\leq
(2D)^{q-p-r} k^{[\ba]} \|u\|_{\HH_k^{p+r}(\R^d)}.
\end{align*}
On the other hand, for the last term, we use \eqref{eq_L2_norm_far}, to obtain
\begin{align*}
k^{-[\ba]} \||\bx|^q \partial^{\ba}( \Pi_D u)\|_{L^2(\{|\bx| > 2D\})}
&\leq
k^{(-[\ba]-q)/2} e^{-\frac{k D^2}{16}} \|u\|_{L^2(\R^d)}
\\
&\leq
C_r k^{(-[\ba]-q)/2} k^{2(q-p-r)} D^{q-p-r} \|u\|_{L^2}
\\
&\leq
C_r D^{(q-p)-r} \|u\|_{L^2}.
\end{align*}
Here, we used the fact that $e^{-\frac{x^2}{16}} \leq C_r x^{-r}$ for all $x \geq 1$,
and the fact that $k \geq 1$. The result follows.
\end{proof}

\subsection{Approximation in $H^p(\R^d)$}

The next key result needed to establish Theorem \ref{theorem_approximation}
concerns approximation in the space $H^p(\R^d)$.

\begin{lemma}[$H^p(\R^d)$ approximation]
For all $p,r \geq 0$, if $u \in \HH^{p+r}(\R^d)$, we have
\begin{equation}
\label{eq_error_diff}
k^{-p}\|\partial_j^p(u-\Pi_D u)\|_{L^2(\R^d)}
\leq
C_{p,r} D^{-r} \|u\|_{\HH^{p+r}_k(\R^d)}.
\end{equation}
\end{lemma}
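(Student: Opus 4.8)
The plan is to expand $u-\Pi_D u$ in the initial frame and differentiate term by term. By \eqref{eq:ResolutionOfIdentity} and the definition \eqref{eq_definition_pi} of $\Pi_D$,
\begin{equation*}
u-\Pi_D u = \sum_{\substack{[\bm,\bn]\in\Z^{2d}\\ |(\xxkm,\xikn)|>D}} (u,\gskmn^\star)\,\gskmn,
\end{equation*}
the series converging in $L^2(\R^d)$. Applying $\partial_j^p$ and invoking the differentiation formula \eqref{eq_diff_gskmn}, we may write $\partial_j^p\gskmn = k^{p/2}\,Q\big(k^{1/2}(x_j-(\xxkm)_j+i(\xikn)_j)\big)\,\gskmn$ for a univariate polynomial $Q$ of degree $p$ depending only on $p$. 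Expanding $Q$ and the ensuing binomial, which separates the ``position'' factor $x_j-(\xxkm)_j$ from the ``parameter'' factor $(\xikn)_j$, one is led to an identity of the form
\begin{equation*}
\partial_j^p\gskmn = \sum_{a+b\le p} \nu_{a,b}\, k^{(p+a+b)/2}\, (\xikn)_j^a\,\big(x_j-(\xxkm)_j\big)^b\, \gskmn ,
\end{equation*}
with complex constants $\nu_{a,b}$ depending only on $p$. The guiding idea is that the parameter factors $(\xikn)_j^a$ will be absorbed into the coefficients thanks to the decay estimate of Proposition \ref{proposition_decay_star}, whereas $\big(x_j-(\xxkm)_j\big)^b\gskmn(\bx)$ is nothing but the Gabor atom built from the Hermite--Gaussian window $\by\mapsto y_j^b(k/\pi)^{d/4}e^{-k|\by|^2/2}$ over our lattice.

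For each $(a,b)$ with $a+b\le p$, I would estimate the corresponding block through
\begin{equation*}
\Big\|\sum_{|(\xxkm,\xikn)|>D} (u,\gskmn^\star)\,(\xikn)_j^a\,\big(x_j-(\xxkm)_j\big)^b\,\gskmn\Big\|_{L^2(\R^d)}
\le C_b\, k^{-b/2}\Big(\sum_{|(\xxkm,\xikn)|>D} |(u,\gskmn^\star)|^2\,(\xikn)_j^{2a}\Big)^{1/2}.
\end{equation*}
This rests on the boundedness of the synthesis operator of the Gabor system generated by the above Hermite--Gaussian window: the unitary dilation $(Uf)(\by)\eq k^{-d/4}f(k^{-1/2}\by)$ carries it onto the $k=1$ system generated by the fixed Schwartz window $\by\mapsto y_j^b\pi^{-d/4}e^{-|\by|^2/2}$, whose synthesis operator is bounded with a norm depending only on $b$ and $d$ (see e.g. \cite{christensen2003introduction}), and the dilation introduces exactly the scalar $k^{-b/2}$. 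For the right-hand side I would use $(\xikn)_j^{2a}\le\big(|\xxkm|^2+|\xikn|^2\big)^a$ together with $\big(|\xxkm|^2+|\xikn|^2\big)^{-r}\le D^{-2r}$ on the truncation region, so that Proposition \ref{proposition_decay_star} applied with exponent $a+r\le p+r$ yields
\begin{equation*}
\sum_{|(\xxkm,\xikn)|>D} |(u,\gskmn^\star)|^2\,(\xikn)_j^{2a}
\le D^{-2r}\sum_{[\bm,\bn]\in\Z^{2d}} \big(|\xxkm|^2+|\xikn|^2\big)^{a+r}|(u,\gskmn^\star)|^2
\le C_{p,r}\, D^{-2r}\,\|u\|_{\HH^{p+r}_k(\R^d)}^2 .
\end{equation*}

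Summing over the finitely many pairs $(a,b)$ with $a+b\le p$ and collecting the powers of $k$, the block indexed by $(a,b)$ carries the factor $k^{(p+a+b)/2}\cdot k^{-b/2}=k^{(p+a)/2}\le k^p$ since $a\le p$ and $k\ge 1$; hence
\begin{equation*}
\|\partial_j^p(u-\Pi_D u)\|_{L^2(\R^d)}\le C_{p,r}\, k^p\, D^{-r}\,\|u\|_{\HH^{p+r}_k(\R^d)},
\end{equation*}
which is \eqref{eq_error_diff} after dividing by $k^p$. The same computation shows that the tail series, differentiated $p$ times, converges in $L^2(\R^d)$, which legitimizes the term-by-term differentiation and, in passing, proves $u-\Pi_D u\in H^p(\R^d)$. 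The step I expect to be the most delicate is this synthesis bound for the Hermite--Gaussian-windowed Gabor system with a constant \emph{uniform in} $k$: this is exactly where the dilation invariance of the construction is used, and one must make sure that the resulting $k^{-b/2}$ scaling is precisely what keeps the constant in \eqref{eq_error_diff} free of $k$.
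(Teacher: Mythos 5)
Your proof is correct, but it follows a genuinely different route from the paper's. The paper never differentiates the frame expansion term by term: instead it applies the analysis operator of the original Gaussian frame to $\partial_j^p(u-\Pi_D u)$, splits the resulting lattice sum into a near region $|(\xxkm,\xikn)|\le 2D$ and a far region, and controls these with Corollary \ref{corollary_error_xL2} (which rests on the ``smallness far from the origin'' lemma), the $L^2$ estimate of Lemma \ref{Lem:ApproxL2}, the coefficient decay of Proposition \ref{proposition_decay} applied to $\partial_j^p u$, and the explicit Gaussian inner-product bounds of Lemma \ref{lemma_L2_products}. You instead expand the error in the frame, rewrite $\partial_j^p\gskmn$ via Lemma \ref{lemma_differentiation} as a combination of Hermite--Gaussian-windowed Gabor atoms weighted by powers of $(\xikn)_j$, absorb those weights into the coefficients via Proposition \ref{proposition_decay_star} with exponent $a+r\le p+r$, and control the synthesis via the Bessel bound for the auxiliary Gabor systems with windows $y_j^b e^{-|\by|^2/2}$; your dilation argument correctly shows this bound is uniform in $k$ and produces exactly the $k^{-b/2}$ needed, and your power counting $k^{(p+a+b)/2}\cdot k^{-b/2}\le k^p$ is right. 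What your route buys is a shorter argument that bypasses the near/far splitting and the technical preliminaries of Section \ref{Sec:Proof} entirely; what it costs is the importation of one fact the paper does not establish, namely the Bessel property of Gabor systems generated by a fixed Schwartz (Hermite--Gaussian) window on the lattice $\sqrt{\pi}\Z^{2d}$. That fact is standard and is covered by the reference \cite{christensen2003introduction} you cite, so this is a legitimate dependency rather than a gap, but in a self-contained write-up you should either prove it (e.g.\ via the usual Casazza--Christensen sufficient condition, which is immediate for Schwartz windows) or state it as an explicit lemma with a precise citation.
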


\begin{proof}
Consider $u \in \HH^{p+r}(\R^d)$, fix $D > 0$ and let $\CE \eq u-\Pi_D u$, so that
\begin{equation*}
\CE = \sum_{|(\xxkm,\xikn)| > D} (u,\gskmn^\star) \gskmn.
\end{equation*}
Recall that the family $(\gskmn)_{[\bm,\bn] \in \Z^{2d}}$ is a frame, so that
\begin{equation*}
k^{-2p} \|\partial_j^p \CE\|^2_{L^2(\R^d)}
\leq
C
k^{-2p}
\sum_{[\bm,\bn] \in \Z^{2d}} |(\partial^p_j \CE,\gskmn)|^2
=
C(\sigma_{\rm small} + \sigma_{\rm large})
\end{equation*}
with
\begin{equation*}
\sigma_{\rm small}
\eq
k^{-2p} \sum_{|(\xxkm,\xikn)| \leq 2D} |(\partial^p_j \CE,\gskmn)|^2,
\quad
\sigma_{\rm large}
\eq
k^{-2p} \sum_{|(\xxkm,\xikn)| > 2D} |(\partial^p_j \CE,\gskmn)|^2.
\end{equation*}

We start by estimating the first term. Recalling \eqref{eq_diff_gskmn}
from Lemma \ref{lemma_differentiation}, we have
\begin{align*}
|(\partial^p_j \CE,\gskmn)|^2
=
|(\CE,\partial^p_j \gskmn)|^2
=
k^p\left|\left(\CE,Q_p\big(\sqrt{k} (x_j^{k,\bm} - x_j + i \xi_j^{k, \bn})\big)\gskmn\right)\right|^2.
\end{align*}
The binomial theorem yields that 
\begin{align*}
|Q_p(k^{1/2}(x_j^{k,\bm}-x_j+i\xi_j^{k,\bn}))|^2
&\leq
C_p \sum_{q=0}^p (k^{1/2}|x_j^{k,\bm}+i\xi_j^{k,\bn}|)^{2(p-q)} (k^{1/2}x_j)^{2q}
\\
&\leq
C_p \sum_{q=0}^p (k^{1/2} D)^{2(p-q)} k^q |\bx|^{2q},
\end{align*}
so that
\begin{equation*}
|(\partial_j^p \CE,\gskmn)|^2
\leq
C_p k^p \sum_{q=0}^p (k^{1/2} D)^{2(p-q)} k^q |(|\bx|^q \CE,\gskmn)|^2
\end{equation*}
for all $[\bm,\bn] \in \Z^{2d}$ with $|(\xxkm,\xikn)| \leq 2D$.
Since the coherent states form a frame, after summation over
$[\bm,\bn]$, we may write
\begin{align*}
\sigma_{\rm small}
&\leq
C_p k^{-p}
\sum_{q=0}^p (k^{1/2} D)^{2(p-q)} k^q \left\||\bx|^q \CE\right\|_{L^2(\R^d)}^2
\\
&=
C_p  D^{2p} \sum_{q=0}^p D^{-2q}  \left\||\bx|^q \CE\right\|_{L^2(\R^d)}^2.
\end{align*}
Since $q \leq p$, we may employ Corollary \ref{corollary_error_xL2} with $\ba =0$,
leading to
\begin{align*}
\sigma_{\rm small}
&\leq
C_{p,r} D^{2p}
\sum_{q=0}^p
\left (
\|\CE\|_{L^2(\R^d)}^2
+
D^{-2(p+r)} \|u\|_{\HH_k^{q+r}(\R^d)}^2
\right )
\\
&\leq
C_{p,r} \left (D^{2p} \|\CE\|_{L^2(\R^d)}^2
+
D^{-2r} \|u\|_{\HH_k^{p+r}(\R^d)}^2
\right ), 
\end{align*}
and it follows from \eqref{eq_L2_error_dual} that
\begin{equation*}
\sigma_{\rm small}
\leq
C_{p,r}  D^{-2r} \|u\|_{\HH_k^{p+r}(\R^d)}^2,
\end{equation*}
which is the desired result.

We now turn $\sigma_{large}$, that we further split as
\begin{equation*}
\sigma_{\rm large}
\leq
2 (\sigma_{\rm large}' + \sigma_{\rm large}''),
\end{equation*}
where
\begin{align*}
\sigma_{\rm large}'
&\eq
k^{-2p} \sum_{|(\xxkm,\xikn)| > 2D} |(\partial^p_j u,\gskmn)|^2
\\
\sigma_{\rm large}''
&\eq
k^{-2p} \sum_{|(\xxkm,\xikn)| > 2D} |(\partial^p_j\Pi_D u,\gskmn)|^2.
\end{align*}

We handle the first component with Proposition \ref{proposition_decay}.
Indeed, it follows from \eqref{eq_decay} that
\begin{align*}
\sigma_{\rm large}'
&\leq
k^{-2p} \sum_{|(\xxkm,\xikn)| > 2D} |(\partial^p_j u,\gskmn)|^2
\\
&\leq
C_r k^{-2p} D^{-2r} \sum_{|(\xxkm,\xikn)| > 2D} |(\xxkm,\xikn)|^{2r} |(\partial^p_j u,\gskmn)|^2
\\
&\leq
C_r k^{-2p} D^{-2r}\|\partial^p_j u\|_{\HH_k^r(\R^d)}^2
\leq
C_r D^{-2r}\|u\|_{\HH_k^{p+r}(\R^d)}^2.
\end{align*}
For the second component, we invoke Lemma \ref{lemma_L2_products}. On the one hand,
recalling the definition of $\Pi_D u$ in \eqref{eq_definition_pi}, we have
\begin{multline*}
|(\partial_j^p(\Pi_D u),\gskmn)|^2
=
\left | \sum_{|(\xxx{k}{\bm'},\xxi{k}{\bn'})| \leq D} 
(u, \gskmnp^\star) (\partial_j^p \gskmnp,\gskmn)\right |^2
\\
\leq
C (k^{1/2} D)^{2d}
\sum_{|(\xxx{k}{\bm'},\xxi{k}{\bn'})| \leq D} 
|(u, \gskmnp^\star)|^2 |(\partial_j^p \gskmnp,\gskmn)|^2,
\end{multline*}
since the summation happens on a set of cardinal less than $C (k^{1/2} D)^d$.
On the other hand, by Lemma \ref{lemma_L2_products},
\begin{multline*}
|(\partial_j^p \gskmn,\gskmnp)|
\leq
C_p k^{p/2} \left ( 1 + |\bn|^p \right )
e^{-\frac{\pi}{8} k D^2}
\\
=
C_p k^{p/2} \left ( 1 + (k^{1/2}D)^p \right ) e^{-\frac{\pi}{8} k D^2}
\leq
C_{p,r} k^{p/2} (k^{1/2}D)^{-r-d}
\leq
C_{p,r} k^{p/2} (k^{1/2}D)^{-d} D^{-r}
\end{multline*}
since $k \geq 1$.

Using the frame property, we obtain
\begin{align*}
\sigma_{\rm large}''
&\leq
k^{-2p} \left (
C_{p,r} k^{p} D^{-2r}
\sum_{|(\xxx{k}{\bm'},\xxi{k}{\bn'})| \leq D}  |(u,\gskmnp^\star) |^2
\right )
\\
&\leq 
C_{p,r} D^{-2r} \|u\|^2_{L^2(\R^d)}
\\
&\leq
C_{p,r} D^{-2r} \|u\|_{\HH^{p+r}_k(\R^d)}^2.
\end{align*}
\end{proof}

We are now ready to conclude the proof of our main result.

\begin{proof}[Proof of Theorem \ref{theorem_approximation}]
As previously, we set $\CE \eq u - \Pi_D u$. Using Fourier transform,
we start by observing that
\begin{equation*}
\|\partial^{\ba} \CE\|_{L^2(\R^d)}^2
=
\|(i\bxi)^{\ba} \CF \CE\|_{L^2(\R^d)}^2
\leq
\||\bxi|^{[\ba]} \CF \CE\|_{L^2(\R^d)}^2
\leq C
\sum_{j=1}^d \|\partial_j^{[\ba]} \CE\|_{L^2(\R^d)}^2.
\end{equation*}
As a result, using \eqref{eq_error_diff} (with $p=[\ba]$ and $r = (p+r)-[\ba]$), we have
\begin{equation*}
k^{-[\ba]} \|\partial^{\ba} \CE\|_{L^2(\R^d)}
\leq
C_{p,r}  D^{[\ba]-(p+r)} \|u\|_{\HH_k^{p+r}(\R^d)}.
\end{equation*}
for all $\ba \in \N^d$ with $[\ba] \leq p$.

Let $q\leq p$, and let $\ba \in \N^d$ with $\textcolor{violet}{q}+|\ba| \leq p$. Using \eqref{eq_error_xL2}, we have
\begin{align*}
&
k^{-[\ba]} \||\bx|^q \partial^{\ba} \CE\|_{L^2(\R^d)}
\\
&\leq
C_{p,r}
D^q
\left (
k^{-[\ba]} \|\partial^{\ba} \CE\|_{L^2(\R^d)}
+
D^{-(p+r)} \|u\|_{\HH_k^{p+r}(\R^d)}
\right )\\
&\leq
C_{p,r}
\left (
D^{[\ba]+q-p-r}
\|u\|_{\HH_k^{p+r}(\R^d)}
+
D^{(q-p)-r}
\|u\|_{\HH_k^{p+r}(\R^d)}
\right )
\\
&\leq
C_{p,r} 
D^{-r}
\|u\|_{\HH_k^{p+r}(\R^d)}
\end{align*}
since $q+[\ba] \leq p$ and $D \geq 1$, which concludes the proof.
\end{proof}

\appendix

\section{Elementary facts concerning Gaussian states}\label{App:Elem}

\subsection{Fourier transform and derivatives}

We start by recalling the expression of the Fourier transform of Gaussian states.

\begin{lemma}[Fourier transform]
For all $[\bm,\bn] \in \Z^{2d}$, we have
\begin{equation}
\label{eq_fourier_gskmn}
\CF_k(\gskmn) = e^{-ik\xxkm \cdot \xikn} \gs{k}{\bn}{-\bm}.
\end{equation}
\end{lemma}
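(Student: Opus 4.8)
The plan is to evaluate the defining integral directly and reduce it to the standard Gaussian integral. Since $\gskmn \in \CS(\R^d)\subset L^1(\R^d)$, its Fourier transform is given by the integral formula, and I would start from
\[
\CF_k(\gskmn)(\bxi)
=
\left(\frac{k}{2\pi}\right)^{d/2}\left(\frac{k}{\pi}\right)^{d/4}
\int_{\R^d} e^{-\frac{k}{2}|\bx-\xxkm|^2}\, e^{ik(\bx-\xxkm)\cdot\xikn}\, e^{-ik\bx\cdot\bxi}\,\dx .
\]
Substituting $\by = \bx-\xxkm$ and using $e^{-ik\bx\cdot\bxi} = e^{-ik\xxkm\cdot\bxi}\,e^{-ik\by\cdot\bxi}$ pulls the phase $e^{-ik\xxkm\cdot\bxi}$ out of the integral and leaves $\int_{\R^d} e^{-\frac{k}{2}|\by|^2}\,e^{-ik\by\cdot(\bxi-\xikn)}\,\dy$ inside.

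I would then invoke the elementary identity $\int_{\R^d} e^{-\frac{k}{2}|\by|^2}\,e^{-ik\by\cdot\bw}\,\dy = \left(\frac{2\pi}{k}\right)^{d/2} e^{-\frac{k}{2}|\bw|^2}$ for $\bw\in\R^d$ (proved by completing the square and shifting the contour of integration), applied with $\bw = \bxi-\xikn$. The prefactors $(k/2\pi)^{d/2}$ and $(2\pi/k)^{d/2}$ cancel, yielding
\[
\CF_k(\gskmn)(\bxi)
=
\left(\frac{k}{\pi}\right)^{d/4} e^{-ik\xxkm\cdot\bxi}\, e^{-\frac{k}{2}|\bxi-\xikn|^2}.
\]

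It remains to match this with the claimed expression. Splitting $e^{-ik\xxkm\cdot\bxi} = e^{-ik\xxkm\cdot\xikn}\,e^{-ik\xxkm\cdot(\bxi-\xikn)}$ and recalling from \eqref{eq_def_xx_xi} that $\xxx{k}{\bn} = \xikn$ and $\xxi{k}{-\bm} = -\xxkm$, one recognizes
\[
\left(\frac{k}{\pi}\right)^{d/4} e^{-\frac{k}{2}|\bxi-\xikn|^2}\, e^{-ik\xxkm\cdot(\bxi-\xikn)}
=
\left(\frac{k}{\pi}\right)^{d/4} e^{-\frac{k}{2}|\bxi-\xxx{k}{\bn}|^2}\, e^{ik(\bxi-\xxx{k}{\bn})\cdot\xxi{k}{-\bm}}
=
\gs{k}{\bn}{-\bm}(\bxi),
\]
which is precisely \eqref{eq_fourier_gskmn}. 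No step is a genuine obstacle; the only thing demanding care is the bookkeeping of the quadratic and linear phase factors and the identification of the position and momentum parameters of $\gs{k}{\bn}{-\bm}$ with $\xikn$ and $-\xxkm$. Alternatively, one may factor $\gskmn$ as a modulation-translation of the ground state $\gs{k}{\bzero}{\bzero}$ and combine $\CF_k(\gs{k}{\bzero}{\bzero}) = \gs{k}{\bzero}{\bzero}$ with the translation/modulation covariance of $\CF_k$; the phase computation is identical.
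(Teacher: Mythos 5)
Your computation is correct and follows essentially the same route as the paper: write out the defining integral, translate by $\xxkm$, and evaluate the resulting Gaussian integral by completing the square, arriving at $\left(\tfrac{k}{\pi}\right)^{d/4} e^{-ik\xxkm\cdot\bxi}\, e^{-\frac{k}{2}|\bxi-\xikn|^2}$. The only (harmless) differences are that the paper rescales the integration variable by $k^{1/2}$ while you do not, and that you spell out the final identification of this expression with $e^{-ik\xxkm\cdot\xikn}\gs{k}{\bn}{-\bm}$, a step the paper leaves to the reader.
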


\begin{proof}
Let $\by = k^{1/2}(\bx-\xxkm)$, we have
\begin{align*}
\CF_k(\gskmn)
&=
2^{d/2}\left (\frac{k}{2\pi}\right )^{3d/4}
\int_{\R^d}
e^{-\frac{k}{2}|\bx-\xxkm|^2}
e^{ik(\bx-\xxkm) \cdot \xikn}
e^{-ik\bx \cdot \bxi}
\dx
\\
&=
2^{d/2}\left (\frac{k}{2\pi}\right )^{3d/4}
e^{-i\xxkm \cdot \bxi}
\int_{\R^d}
e^{-\frac{|\by|^2}{2}}
e^{ik^{1/2} \by \cdot \xikn} e^{-ik^{1/2} \by \cdot \bxi} \dy
\\
&=
2^{d/2}\left (\frac{k}{2\pi}\right )^{3d/4}
e^{-i\xxkm \cdot \bxi}
\int_{\R^d}
e^{-\frac{|\by|^2}{2}}
e^{ik^{1/2} \by \cdot (\xikn-\bxi)} \dy
\\
&=
\left (\frac{k}{\pi} \right )^{d/4}
e^{-\frac{k}{2}|\bxi-\xikn|^2} e^{-i\xxkm \cdot \bxi}.
\end{align*}
\end{proof}

The following lemma gives an elementary expression for the derivatives of $\Psi_{k,\bm,\bn}$.

\begin{lemma}[Differentiation]
\label{lemma_differentiation}
For all $p \in \N$, there exists a polynomial $Q_p$ of degree $p$, with real coefficients,
and leading coefficient $z^p$, such that for all $j \in \{1,...,d\}$ and all
$[\bm,\bn]\in \Z^{2d}$, we have
\begin{equation}
\label{eq_diff_gskmn}
(\partial_j^p \gskmn) (\bx)
=
k^{p/2}Q_p\big(k^{1/2} (x_j^{k,\bm} - x_j + i \xi_j^{k, \bn})\big) \gskmn(\bx).
\end{equation}
\end{lemma}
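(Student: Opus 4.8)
The plan is to argue by induction on $p$, removing one derivative at a time. First I would differentiate $\gskmn$ directly from its definition: the only $x_j$-dependence sits in the exponent $-\tfrac{k}{2}|\bx-\xxkm|^2 + ik(\bx-\xxkm)\cdot\xikn$, whose derivative with respect to $x_j$ equals $-k(x_j-x_j^{k,\bm}) + ik\xi_j^{k,\bn} = k(x_j^{k,\bm}-x_j+i\xi_j^{k,\bn})$. Introducing the shorthand $w \eq x_j^{k,\bm}-x_j+i\xi_j^{k,\bn}$ and $z \eq k^{1/2} w$ — so that $\partial_j w = -1$ and hence $\partial_j z = -k^{1/2}$ — this reads $\partial_j\gskmn = k w\, \gskmn = k^{1/2} z\, \gskmn$, which is \eqref{eq_diff_gskmn} for $p=1$ with $Q_1(z) = z$; the case $p=0$ holds trivially with $Q_0 \equiv 1$.

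For the induction step, assume $\partial_j^p\gskmn = k^{p/2} Q_p(z)\,\gskmn$ with $Q_p$ a polynomial of degree $p$ with real coefficients and leading term $z^p$. Applying $\partial_j$ and combining the product and chain rules with $\partial_j z = -k^{1/2}$ and $\partial_j\gskmn = k^{1/2} z\,\gskmn$ gives
\[
\partial_j^{p+1}\gskmn
= k^{p/2}\big(Q_p'(z)\,\partial_j z + Q_p(z)\,k^{1/2} z\big)\gskmn
= k^{(p+1)/2}\big(z Q_p(z) - Q_p'(z)\big)\gskmn.
\]
I would then set $Q_{p+1}(z) \eq z Q_p(z) - Q_p'(z)$. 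Since $Q_p$ has real coefficients, so does $Q_{p+1}$; since $\deg(z Q_p) = p+1 > p-1 \geq \deg Q_p'$, the polynomial $Q_{p+1}$ has degree exactly $p+1$; and its leading term is that of $z Q_p(z)$, namely $z\cdot z^p = z^{p+1}$. This closes the induction and proves the lemma.

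I do not expect a genuine obstacle here. The only points requiring a little care are tracking the factor of $k^{1/2}$ that the chain rule produces each time it acts on $z = k^{1/2} w$ (so that the power $k^{p/2}$ increments correctly), and checking that the three-term recurrence $Q_{p+1} = z Q_p - Q_p'$ — which is precisely the recurrence defining the probabilists' Hermite polynomials — preserves the degree, monicity, and reality of coefficients; all three are immediate from the degree count above.
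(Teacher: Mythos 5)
Your argument is essentially the paper's: the same induction on $p$, the same base case $\partial_j\gskmn = k\,(x_j^{k,\bm}-x_j+i\xi_j^{k,\bn})\gskmn$, and the same Hermite-type recurrence generating $Q_{p+1}$ from $Q_p$, with the conclusion resting only on degree, monicity, and reality of coefficients. The one discrepancy is the sign: your recurrence $Q_{p+1}(z)=zQ_p(z)-Q_p'(z)$ correctly accounts for $\partial_j z=-k^{1/2}$, whereas the paper writes $Q_{p+1}(z)=Q_p'(z)+zQ_p(z)$ (a harmless slip, since only the degree, leading coefficient, and realness of $Q_p$ are used later), so your version is if anything the more careful one.
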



\begin{proof}
We define the polynomials by induction, setting $Q_0 = 1$, and
\begin{equation*}
Q_{p+1}(z)  = Q_p'(z) + z Q_p(z),
\end{equation*}
for $p \geq 0$.
An elementary induction shows that $Q_p$ is a polynomial of degree $p$ whose term of
degree $p$ is $z^p$.

Our aim is now to show inductively that \eqref{eq_diff_gskmn} holds.
This is trivially the case for $p=0$ and for $p=1$, noting that
\begin{equation*}
\partial_j \gskmn
=
k\left (x_j^{k,\bm} - x_j + i \xi_j^{k, \bn} \right )
\gskmn.
\end{equation*}
Now, suppose the result holds for some $p\in \N$. We then have 
\begin{align*}
\partial_j^{p+1} \Psi_{k, \bm, \bn}
&=
k^{p/2} \Big[ k^{1/2} \partial_{j}Q_p(\sqrt{k}(x_j^{k,\bm} - x_j + i \xi_j^{k, \bn}))
\gskmn
\\
&+
Q_p(k^{1/2}(x_j^{k,\bm} - x_j + i \xi_j^{k,\bn}))
\times
k (x_j^{k,\bm} - x_j + i \xi_j^{k, \bn}) \gskmn \Big ]
\\
&=
k^{(p+1)/2} Q_{p+1}(k^{1/2}(x_j^{k,\bm}-x_j + i\xi_j^{k,\bn}))\gskmn,
\end{align*}
which proves the result.
\end{proof}

\subsection{Scalar products between Gaussian states}

We then provide expression and upper-bounds of inner-products involving
Gaussian states and their derivatives. 

\begin{proposition}[$L^2$ products]
\label{lemma_L2_products}
For all $[\bm,\bn],[\bm',\bn'] \in \Z^{2d}$, we have
\begin{equation}
\label{eq:NormePS}
|(\gskmn,\gskmnp)|
=
e^{-\frac{\pi}{4}|[\bm,\bn]-[\bm',\bn']|^2}.
\end{equation}
In addition, for all $p \in \mathbb N$, there exists
a constant $C_p$ such that
\begin{equation}
\label{eq:Eq:PSDerivFacile}
|(\partial_j^p \gskmn,\gskmnp)|
\leq
C_p k^{p/2} \left ( 1 + |\bn|^p \right )
e^{-\frac{\pi}{8}|[\bm,\bn]-[\bm',\bn']|^2}
\end{equation}
for all $[\bm,\bn],[\bm',\bn'] \in \Z^{2d}$.
\end{proposition}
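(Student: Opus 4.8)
The plan is to reduce everything to a single one-dimensional Gaussian integral. First I would establish \eqref{eq:NormePS}. Writing out $(\gskmn, \gskmnp)$ as an integral over $\R^d$, the integrand is a product of two complex Gaussians, and the modulus of the result factors over the $d$ coordinates. For each coordinate $\ell$, one completes the square in the exponent: the real part contributes a term proportional to $|x_\ell^{k,m_\ell} - x_\ell^{k,m'_\ell}|^2$ (from the mismatch in centers), the oscillatory parts contribute a term proportional to $|\xi_\ell^{k,n_\ell} - \xi_\ell^{k,n'_\ell}|^2$ (after the Gaussian average kills the linear phase), and the cross terms between position and frequency mismatch end up purely imaginary, hence invisible to the modulus. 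Recalling $\xxkm = \sqrt{\pi/k}\,\bm$ and $\xikn = \sqrt{\pi/k}\,\bn$, the scaling factors $\sqrt{k}$ and $\sqrt{\pi/k}$ combine to produce exactly $e^{-\frac{\pi}{4}|[\bm,\bn]-[\bm',\bn']|^2}$, with the normalizing constant $(k/\pi)^{d/4}$ chosen precisely so that the prefactor is $1$. A cleaner bookkeeping route is to use \eqref{eq_fourier_gskmn} together with the Parseval identity for $\CF_k$ to symmetrize the roles of $\bm$ and $\bn$, but the direct computation is already short.

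Next, for \eqref{eq:Eq:PSDerivFacile}, I would invoke Lemma \ref{lemma_differentiation}: we have
\begin{equation*}
(\partial_j^p \gskmn, \gskmnp)
=
k^{p/2}\big(Q_p\big(k^{1/2}(x_j^{k,\bm} - x_j + i\xi_j^{k,\bn})\big)\gskmn, \gskmnp\big),
\end{equation*}
so the left-hand side is bounded by $k^{p/2}$ times a finite sum (with $p$-dependent coefficients, the coefficients of $Q_p$) of integrals of the form $\int_{\R^d} |k^{1/2}(x_j^{k,\bm} - x_j + i\xi_j^{k,\bn})|^m\, |\gskmn(\bx)|\,|\gskmnp(\bx)|\,\dx$ with $m \le p$. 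Since $|\gskmn\overline{\gskmnp}|$ is a genuine (real, positive) Gaussian centered at the midpoint $\tfrac12(\xxkm+\xxkmp)$ with variance of order $k^{-1}$, I would split $k^{1/2}(x_j^{k,\bm} - x_j) = k^{1/2}(x_j^{k,\bm} - \text{center}) - k^{1/2}(x_j - \text{center})$: the first piece is $O(k^{1/2}|\xxkm - \xxkmp|) = O(|\bm - \bm'|)$, the second integrates against the Gaussian to give an $O(1)$ moment, and the imaginary part contributes $k^{1/2}|\xi_j^{k,\bn}| = \sqrt{\pi}\,|n_j| \le \sqrt{\pi}\,|\bn|$. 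Expanding the $m$-th power by the binomial/multinomial theorem and estimating each term, one obtains a bound of the form $C_p(1 + |\bm-\bm'|^p + |\bn|^p)$ times the Gaussian mass $e^{-\frac{\pi}{4}|[\bm,\bn]-[\bm',\bn']|^2}$ from \eqref{eq:NormePS}. Finally, since $|\bm - \bm'|^p e^{-\frac{\pi}{4}|\bm-\bm'|^2} \le C_p$ uniformly, one absorbs the $|\bm-\bm'|^p$ term into the constant at the cost of weakening the exponent from $\tfrac{\pi}{4}$ to $\tfrac{\pi}{8}$, which yields exactly \eqref{eq:Eq:PSDerivFacile}.

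The main obstacle is purely bookkeeping: keeping track of the several scaling factors ($k^{1/2}$ from differentiation, $\sqrt{\pi/k}$ in the lattice definition, $(k/\pi)^{d/4}$ in the normalization) so that the final exponents and the $k$-powers come out clean, and correctly separating the "polynomially growing in the lattice displacement" part from the "bounded moment against a fixed Gaussian" part. There is no conceptual difficulty — once one observes that $|\gskmn\overline{\gskmnp}|$ is itself a scaled Gaussian whose total mass is given by \eqref{eq:NormePS}, every remaining estimate is a standard Gaussian moment bound combined with the elementary inequality $t^p e^{-ct^2}\le C_{p,c}$. I would present \eqref{eq:NormePS} in full (it is the crux) and treat \eqref{eq:Eq:PSDerivFacile} more telegraphically, citing Lemma \ref{lemma_differentiation} and the binomial theorem.
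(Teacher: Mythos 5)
Your treatment of \eqref{eq:NormePS} is fine and matches the paper's: the paper completes the square via the identity $\tfrac12(|\bx-\by|^2+|\bx-\bz|^2)=|\bx-\tfrac12(\by+\bz)|^2+|\tfrac12(\by-\bz)|^2$ and then takes the Fourier transform of the resulting centered Gaussian against the residual oscillation $e^{ik\bx\cdot(\xikn-\xiknp)}$, which is exactly your "complete the square, the oscillation produces the frequency mismatch" computation.

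The second part, however, has a genuine gap. You propose to bound $|(\partial_j^p\gskmn,\gskmnp)|$ by integrals of the form $\int_{\R^d}|k^{1/2}(x_j^{k,\bm}-x_j+i\xi_j^{k,\bn})|^m\,|\gskmn(\bx)|\,|\gskmnp(\bx)|\,\dx$, i.e.\ you take the modulus \emph{inside} the integral. But $|\gskmn(\bx)|\,|\gskmnp(\bx)|$ no longer contains the phase $e^{ik\bx\cdot(\xikn-\xiknp)}$, and it is precisely this oscillation, integrated against the centered Gaussian, that produces the factor $e^{-\frac{k}{4}|\xikn-\xiknp|^2}$. The total mass of the positive Gaussian $|\gskmn\overline{\gskmnp}|$ is $e^{-\frac{\pi}{4}|\bm-\bm'|^2}$, \emph{not} $|(\gskmn,\gskmnp)|=e^{-\frac{\pi}{4}|[\bm,\bn]-[\bm',\bn']|^2}$; your sentence "times the Gaussian mass $e^{-\frac{\pi}{4}|[\bm,\bn]-[\bm',\bn']|^2}$ from \eqref{eq:NormePS}" conflates the two. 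As written, your argument yields no decay at all in $|\bn-\bn'|$, which is both weaker than \eqref{eq:Eq:PSDerivFacile} and insufficient for its use in the paper (in the bound of $\sigma_{\rm large}''$ the phase-space separation may live entirely in the frequency variable). The fix is what the paper does: after splitting $x_j^{k,\bm}=\tfrac12(x_j^{k,\bm}+x_j^{k,\bm'})+\tfrac12(x_j^{k,\bm}-x_j^{k,\bm'})$ and expanding $Q_p$, keep the remaining integrals $\bigl(\,(x_j^{k,\bm,\bm'}-x_j)^{\ell'}\gskmn,\gskmnp\bigr)$ as oscillatory Gaussian integrals and evaluate them exactly as $\ell'$-th derivatives of $e^{-\frac14|\bxi|^2}$ at $\sqrt{k}(\xikn-\xiknp)$; this retains the factor $e^{-\frac{k}{4}|\xikn-\xiknp|^2}$, and only then are the polynomial factors in $|\bm-\bm'|$ and $|\bn-\bn'|$ absorbed by relaxing the exponent from $\tfrac{\pi}{4}$ to $\tfrac{\pi}{8}$, as you correctly do for the $|\bm-\bm'|$ part.
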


To prove this proposition, we will need the following elementary result.

\begin{lemma}[A useful identity]
\label{proposition_panda}
Let $\bx,\by,\bz \in \R^d$. We have
\begin{equation}
\label{eq_panda_identity}
\frac{1}{2} \left (
|\bx-\by|^2 + |\bx-\bz|^2
\right )
=
\left |\bx - \frac{1}{2}(\by+\bz)\right |^2 + \left |\frac{1}{2}(\by-\bz) \right |^2.
\end{equation}
\end{lemma}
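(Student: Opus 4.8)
The identity to prove is the parallelogram-type relation
\[
\tfrac12\left(|\bx-\by|^2 + |\bx-\bz|^2\right) = \left|\bx - \tfrac12(\by+\bz)\right|^2 + \left|\tfrac12(\by-\bz)\right|^2,
\]
and the plan is simply to expand both sides in terms of inner products on $\R^d$ and check that they agree. This is pure linear algebra, so there is no real obstacle; the only ``difficulty'' is bookkeeping with the factors of $\tfrac12$ and $\tfrac14$.

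Concretely, I would first expand the left-hand side using $|\bu-\bv|^2 = |\bu|^2 - 2\bu\cdot\bv + |\bv|^2$, obtaining
\[
\tfrac12\left(|\bx|^2 - 2\bx\cdot\by + |\by|^2 + |\bx|^2 - 2\bx\cdot\bz + |\bz|^2\right)
= |\bx|^2 - \bx\cdot(\by+\bz) + \tfrac12|\by|^2 + \tfrac12|\bz|^2.
\]
Then I would expand the right-hand side: $\left|\bx - \tfrac12(\by+\bz)\right|^2 = |\bx|^2 - \bx\cdot(\by+\bz) + \tfrac14|\by+\bz|^2$ and $\left|\tfrac12(\by-\bz)\right|^2 = \tfrac14|\by-\bz|^2$. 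Adding these and using $|\by+\bz|^2 + |\by-\bz|^2 = 2|\by|^2 + 2|\bz|^2$, the sum becomes $|\bx|^2 - \bx\cdot(\by+\bz) + \tfrac12|\by|^2 + \tfrac12|\bz|^2$, which matches the left-hand side. This completes the proof.

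Alternatively — and perhaps more cleanly for the write-up — I would substitute $\bx' \eq \bx - \tfrac12(\by+\bz)$ and $\bw \eq \tfrac12(\by-\bz)$, so that $\bx - \by = \bx' - \bw$ and $\bx - \bz = \bx' + \bw$. Then the left-hand side is $\tfrac12\left(|\bx'-\bw|^2 + |\bx'+\bw|^2\right) = |\bx'|^2 + |\bw|^2$ by the standard parallelogram law, which is exactly the right-hand side. This is the shortest route and the one I would present.
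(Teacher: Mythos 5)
Your first argument — expanding both sides in inner products and invoking $|\by+\bz|^2+|\by-\bz|^2 = 2|\by|^2+2|\bz|^2$ — is exactly the proof given in the paper, and it is correct. The alternative substitution via the parallelogram law is a harmless repackaging of the same computation.
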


\begin{proof}
On the one hand, we have
\begin{equation*}
\frac{1}{2}\left (|\bx-\by|^2 + |\bx-\bz|^2\right )
=
|\bx|^2 -\bx \cdot (\by+\bz) + \frac{1}{2}|\by|^2 + \frac{1}{2}|\bz|^2.
\end{equation*}
On the other hand, we have
\begin{equation*}
\left |\bx - \frac{1}{2}(\by+\bz)\right |^2 + \left |\frac{1}{2}(\by-\bz) \right |^2
=
|\bx|^2 - \bx \cdot (\by+\bz) + \frac{1}{4}\left (|\by+\bz|^2+|\by-\bz|^2\right )
\end{equation*}
and the proof follows since
\begin{equation*}
|\by+\bz|^2 + |\by-\bz|^2 = 2|\by|^2 + 2|\bz|^2.
\end{equation*}
\end{proof}


Recall that, if we write, for $k > 0$ and $\bx^\star \in \R^d$
\begin{equation*}
g_{k,\bx^\star}(\bx) \eq e^{-k|\bx-\frac{1}{2}\bx^\star|^2},
\end{equation*}
then we have for all $\bxi^\star \in \R^d$
\begin{equation}\label{TFGaussienne}
\CF_k(g_{k,\bx^\star})(\bxi^\star)
=
2^{-d/2} e^{-\frac{k}{4}|\bxi^\star|^2} e^{-\frac{ik}{2}\bx^\star \cdot \bxi^\star}.
\end{equation}

We will use the following notations
\begin{align*}
\bx^{k,\bm,\bm'}&:= \frac{\xxkm+\xxkmp}{2}\\
\widehat{\bx}^{k,\bm,\bm'}&:= \xxkm-\xxkmp\\
\widehat{\bxi}^{k,\bn,\bn'}&:= \xikn-\xiknp.
\end{align*}

\begin{proof}[Proof of Proposition \ref{lemma_L2_products}]
We have
\begin{align*}
\gskmn(\bx)\overline{\gskmnp(\bx)}
&=
\left (\frac{k}{\pi}\right )^{d/2}
e^{-\frac{k}{2}|\bx-\xxkm |^2} e^{ ik(\bx-\xxkm )\cdot\xikn }
e^{-\frac{k}{2}|\bx-\xxkmp|^2} e^{-ik(\bx-\xxkmp)\cdot\xiknp}
\\
&=
\left (\frac{k}{\pi}\right )^{d/2}
e^{-\frac{k}{2}(|\bx-\xxkm |^2+|\bx-\xxkmp|^2)}
e^{ik\bx \cdot\widehat{\bxi}^{k,\bn,\bn'}}
e^{ik\xxkmp\cdot \widehat{\bxi}^{k,\bn,\bn'}}.
\end{align*}

Using \eqref{eq_panda_identity}, we have
\begin{equation*}
-\frac{k}{2}\left ( |\bx-\xxkm |^2+|\bx-\xxkmp|^2) \right )
=
-k \left |
\bx- \bx^{k,\bm,\bm'}
\right |^2
-
\frac{k}{4}|\widehat{\bx}^{k,\bm,\bm'}|^2,
\end{equation*}
leading to
\begin{multline}
\label{tmp_gaussian_product}
\gskmn(\bx)\overline{\gskmnp(\bx)}
=
\left (\frac{k}{\pi}\right )^{d/2}
e^{ik(\xxkmp\cdot\xiknp-\xxkm\cdot\xikn)}
e^{-\frac{k}{4}|\widehat{\bx}^{k,\bm,\bm'}|^2}
e^{-k|\bx- \bx^{k,\bm,\bm'}|^{2}}
e^{ik\bx \cdot\widehat{\bxi}^{k,\bn,\bn'}}.
\end{multline}
Recalling \eqref{TFGaussienne}, we have
\begin{equation}\label{eq:LesCrepesCestPasMalNonPlus}
\begin{aligned}
&(\gskmn,\gskmnp)
\\
&=
2^{d/2}
e^{ik(\xxkmp\cdot\xiknp-\xxkm\cdot\xikn)}
e^{-\frac{k^2}{4}|\widehat{\bx}^{k,\bm,\bm'}|^2}
\CF_k(g_{k,  2\bx^{k,\bm,\bm'}})(-\widehat{\bxi}^{k,\bn,\bn'})
\\
&=
e^{-i\bx^{k,\bm,\bm'} \cdot \widehat{\bxi}^{k,\bn,\bn'}}
e^{ik(\xxkmp\cdot\xiknp-\xxkm\cdot\xikn)}
e^{-\frac{k^2}{4}|\widehat{\bx}^{k,\bm,\bm'}|^2}
e^{-\frac{k^2}{4}|\widehat{\bxi}^{k,\bn,\bn'}|^2},
\end{aligned}
\end{equation}
which gives \eqref{eq:NormePS}.

Noting that $\bx^{k,\bm} = \bx^{k,\bm,\bm'} + \frac{\widehat{\bx}^{k,\bm,\bm'}}{2} $
and using Lemma \ref{lemma_differentiation}, we get 

\begin{align*}
\partial_j^p \Psi_{k, \bm, \bn} (\bx) &=  k^{p/2}Q_p\left(\sqrt{k} (x_j^{k,\bm,\bm'} - x_j + \frac{\widehat{x}^{k,\bm,\bm'}_j }{2} + i \xi_j^{k, \bn})\right) \Psi_{k, \bm, \bn}(\bx)\\
&= 
 \sum_{\ell=0}^p  k^{\frac{p+\ell}{2}} \sum_{\ell'=0}^\ell c_{\ell,\ell'} \left(\frac{\widehat{x}^{k,\bm,\bm'}_j }{2}+i\xi_j^{k,\bn}\right)^{\ell-\ell'} \left( x_j^{k,\bm,\bm'} - x_j \right)^{\ell'} \Psi_{k, \bm, \bn}(\bx),
\end{align*}
with all the coefficients $c_{\ell,\ell'}$ independent of $k$. Therefore, we have 
\begin{multline}
\label{eq:MaxENAMarreDeCalculer}
\left|(\partial^p_j \gskmn,\gskmnp)\right|
\leq
\\
C_p
\sum_{\ell=0}^p
k^{\frac{p+\ell}{2}}
\sum_{\ell'=0}^\ell
\left(|\widehat{x}^{k,\bm,\bm'}_j|+|\xi_j^{k,\bn}|\right)^{\ell-\ell'}
\left|
\left(
\left( x_j^{k,\bm,\bm'} - x_j \right)^{\ell'} \gskmn, \gskmnp
\right)
\right|. 
\end{multline}

Recalling \eqref{tmp_gaussian_product}, we have
\begin{multline}
\label{tmp_gaussian_product_deriv}
 \left( x_j^{k,\bm,\bm'} - x_j \right)^{\ell'} \gskmn(\bx)\overline{\gskmnp(\bx)}
=
\\
\left (\frac{k}{\pi}\right )^{d/2}
e^{ik(\xxkmp\cdot\xiknp-\xxkm\cdot\xikn)}
e^{-\frac{k}{4}|\widehat{\bx}^{k,\bm,\bm'}|^2}
 \left( x_j^{k,\bm,\bm'} - x_j \right)^{\ell'}  e^{-k|\bx-\bx^{k,\bm,\bm'}|^2}
e^{ik\bx \cdot\widehat{\bxi}^{k,\bn,\bn'}}.
\end{multline}

We then compute

\begin{align*}
&\int_{\R^d}
(\bx-\bx^{k,\bm,\bm'})^{\ell'}_j e^{-k|\bx-\bx^{k,\bm,\bm'}|^2} e^{ik\bx \cdot \widehat{\bxi}^{k,\bn,\bn'}}
d\bx
\\
&=
e^{ik \bx^{k,\bm,\bm'} \cdot  \widehat{\bxi}^{k,\bn,\bn'} }
\int_{\R^d}
k^{\ell'/2} \by_j^\ell e^{-|\by|^2} e^{i\sqrt{k}\by \cdot \widehat{\bxi}^{k,\bn,\bn'}}
d\by~~~~\text{ setting $\by =k^{-1/2} (\bx-\bx^{k,\bm,\bm'})$}
\\
&=
e^{ik \bx^{k,\bm,\bm'} \cdot  \widehat{\bxi}^{k,\bn,\bn'} } (i\sqrt{k}\partial_j)^{\ell'} (e^{-\frac{1}{4}|\bxi|^2}) \left(\sqrt{k}\widehat{\bxi}^{k,\bn,\bn'}\right).
\end{align*}

Now, $\partial_j^{\ell'} (e^{-\frac{1}{4}|\bxi|^2})$ is a polynomial of degree $\ell'$ times $e^{-\frac{1}{4}|\bxi|^2}$, so that its modulus is bounded by $C_{\ell'} (1+ |\bxi|^{\ell'})  e^{-\frac{1}{4}|\bxi|^2}$. Therefore, we have
$$\left| \left( \left( x_j^{k,\bm,\bm'} - x_j \right)^{\ell'} \Psi_{k,\bm,\bn}, \Psi_{k, \bm',\bn'} \right) \right|\leq C k^{\frac{\ell'}{2}} |\widehat{\bxi}^{k,\bn,\bn'}|^{\ell'} e^{-\frac{k}{4}|\widehat{\bx}^{k,\bm,\bm'}|^2} e^{- \frac{k}{4} |\widehat{\bxi}^{k,\bn,\bn'}|^2}.$$

Combining this with (\ref{eq:MaxENAMarreDeCalculer}), we get
\begin{align*}
\left|(\partial^p_j \gskmn,\gskmnp)\right|&\leq
 C_p k^{\frac{p}{2}}  e^{-\frac{k}{4}|\widehat{\bx}^{k,\bm,\bm'}|^2} e^{- \frac{k}{4} |\widehat{\bxi}^{k,\bn,\bn'}|^2}  \sum_{\ell=0}^p  \sum_{\ell'=0}^\ell k^{\frac{\ell+\ell'}{2}}  \left(|\widehat{\bx}^{k,\bm,\bm'}|+|\bxi^{k,\bn}|\right)^{\ell-\ell'} |\widehat{\bxi}^{k,\bn,\bn'}|^{\ell'}\\
 &\leq C_p k^{\frac{p}{2}}  e^{-\frac{k}{4}|\widehat{\bx}^{k,\bm,\bm'}|^2} e^{- \frac{k}{4} |\widehat{\bxi}^{k,\bn,\bn'}|^2}  \sum_{\ell=0}^p  \left(|\sqrt{k} \widehat{\bx}^{k,\bm,\bm'}|+| \sqrt{k} \bxi^{k,\bn}| + |\sqrt{k} \widehat{\bxi}^{k,\bn,\bn'}| \right)^\ell\\
 &\leq  C_p k^{\frac{p}{2}}  e^{-\frac{k}{4}|\widehat{\bx}^{k,\bm,\bm'}|^2} e^{- \frac{k}{4} |\widehat{\bxi}^{k,\bn,\bn'}|^2}  \left[1+  \left(|\sqrt{k} \widehat{\bx}^{k,\bm,\bm'}|+| \sqrt{k} \bxi^{k,\bn}| + |\sqrt{k} \widehat{\bxi}^{k,\bn,\bn'}| \right)^p\right],
\end{align*}
as announced.

\end{proof}

\section{Localisation properties of the dual frame}
\label{App:DecayPSStar}

The aim of this appendix is to give an elementary proof of \eqref{eq:PSStar}.
Recalling the constants $\alpha$ and $\beta$ from the frame inequalities
in \eqref{eq_frame}, we set
\begin{equation}
\label{eq_definition_Y}
Y \eq I- \frac{2}{\alpha+\beta} T_k^\star T_k,
\end{equation}
so that $Y$ is self-adjoint with  $\|Y\|_{L^2(\R^d) \to L^2(\R^d)} \leq \gamma <1$.
In particular,
\begin{equation*}
S_k = (T_k^\star T_k)^{-1} = \frac{2}{\alpha+\beta} (I - Y)^{-1} =  \frac{2}{\alpha+\beta} \sum_{\ell=0}^\infty Y^\ell,
\end{equation*}
where the sum converges for the norm of bounded linear operators acting on $L^2(\R^d)$. Therefore, we have
\begin{align*}
(\gskmn^\star,\gskmnp^\star)
&=
\left (\frac{2}{\alpha+\beta}\right )^2
\left (
\sum_{\ell=0}^{+\infty} Y^\ell \gskmn,
\sum_{\ell'=0}^{+\infty} Y^{\ell'}\gskmnp
\right )
\\
&=
\left (\frac{2}{\alpha+\beta}\right )^2
\sum_{\ell, \ell'=0}^{+\infty}
\left (\gskmn, Y^{\ell+\ell'}\gskmnp \right )
\\
&=
\left (\frac{2}{\alpha+\beta}\right )^2
\sum_{p=0}^{+\infty}
(p+1) \left (\gskmn, Y^p\gskmnp \right ).
\end{align*}

We claim that there exists $A>1$ such that, for all $p\in \N$ and all
$[\bm, \bn]$, $[\bm', \bn']\in \Z^{2d}$, we have
\begin{equation}
\label{eq:RecY}
|\left(\gskmn,Y^{p} \gskmnp \right)|
\leq
A^p e^{-\frac{\pi}{4} |[\bm, \bn]- [\bm', \bn']|}.
\end{equation}
Let us prove this result by induction. It trivially holds for $p=0$,
due to \eqref{eq:NormePS}. Assume now that \eqref{eq:RecY} holds for $p-1$.
Recalling \eqref{eq_definition_Y}, we have
\begin{equation*}
Y^p = Y^{p-1} - \frac{2}{\alpha+\beta}  (T_k^\star T_k)Y^{p-1}
\end{equation*}
and its follows from \eqref{eq_TksTku} that
\begin{align*}
\left (\gskmn, Y^p \gskmnp \right )
&=
\left(\gskmn ,  Y^{p-1} \gskmnp \right)
\\
&-\frac{2}{\alpha+\beta}
\sum_{[\bm'', \bn'']\in \Z^{2d}}  \left(\gskmn, \gskmnpp\right )
\left(\gskmnpp ,  Y^{p-1} \gskmnp \right),
\end{align*}
so that, by our induction hypothesis and \eqref{eq:NormePS}, we have 
\begin{align*}
\left | \left ( \gskmn,  Y^{p} \gskmnp \right )\right |
&\leq
A^{p-1}e^{-\frac{\pi}{4} |[\bm, \bn]- [\bm', \bn']|}
\\
&+
\frac{2A^{p-1}}{\alpha+\beta}
\sum_{[\bm'', \bn'']\in \Z^{2d}}
e^{-\frac{\pi}{4} |[\bm, \bn]- [\bm'', \bn'']|^2}
e^{- \frac{\pi}{4} |[\bm', \bn']- [\bm'', \bn'']|}.
\end{align*}

We now simplify the sum. For the sake of simplicity, we introduce
$\bq = [\bm,\bn]$, $\bq' = [\bm',\bn']$ and $\bq'' = [\bm'',\bn'']$.
The triangle inequality reveals that
\begin{align*}
|\bq- \bq''|^2 + |\bq'- \bq''|
&=
|\bq- \bq''|^2 - |\bq-\bq''| + |\bq- \bq''|  + |\bq'- \bq''|
\\
&\geq
|\bq- \bq''|^2 - |\bq-\bq''| +  |\bq - \bq'|,
\end{align*}
and as a result
\begin{align*}
\sum_{\bq'' \in \Z^{2d}}
e^{-\frac{\pi}{4} |\bq- \bq''|^2}
e^{-\frac{\pi}{4} |\bq'-\bq''|}
\leq
e^{-\frac{\pi}{4}|\bq-\bq'|}
\sum_{\bq'' \in \Z^{2d}}
e^{\frac{\pi}{4} (|\bq-\bq''|-|\bq- \bq''|^2)}
\leq
\sigma e^{-\frac{\pi}{4} |\bq-\bq'|},
\end{align*}
where we easily see that
\begin{equation*}
\sigma
\eq
\sum_{\bq'' \in \Z^{2d}}
e^{\frac{\pi}{4} (|\bq-\bq''|-|\bq- \bq''|^2)}
\end{equation*}
is finite and does not depend on $\bq$.

Therefore, we have 
\begin{align*}
\left| \left (\gskmn, Y^{p} \gskmnp \right )\right|
&\leq
A^{p-1}\left (1 + \frac{2\sigma}{\alpha+\beta}\right )
e^{-\frac{\pi}{4} |[\bm, \bn]- [\bm', \bn']|}
\end{align*}
and \eqref{eq:RecY} holds, provided we take 
\begin{equation*}
A
\geq
1
+
\frac{2\sigma}{\alpha+\beta}.
\end{equation*}

To conclude, we write $\delta \eq |[\bm, \bn]-[\bm', \bn']|$,
and we let $P$ denote the largest integer such that $P \leq \delta^{1-\varepsilon}$.
We have, for any $\gamma' \in (\gamma, 1)$
\begin{align*}
|\left (\gskmn^\star,\gskmnp^\star) \right|
&\leq C
\sum_{p=0}^{P}
(p+1) \left|\left (\gskmn,  Y^{p}\gskmnp \right )\right|
+
C \sum_{p=P+1}^{+\infty} (p+1)\gamma^p
\\
&\leq C
e^{- \frac{\pi}{4} \delta}
\sum_{p=0}^{P} (p+1) A^p
+
C \sum_{p=P+1}^{+\infty} (\gamma')^p
\\
&\leq
C (P+1)^2A^P e^{-\frac{\pi}{4} \delta}
+
C (\gamma')^{P+1}
\\
&\leq
C\delta^2 A^{\delta^{1-\varepsilon}} e^{-\frac{\pi}{4}\delta}
+
C (\gamma')^{\delta^{1-\varepsilon}}.
\end{align*}
One readily checks that both terms are bounded by $C_\varepsilon e^{- \delta^{1-2\varepsilon}}$ for any $\varepsilon>0$,
which concludes the proof.

\section{Sharpness of Theorem \ref{theorem_approximation}}\label{App:Sharp}
\label{appendix_sharp}

The aim of this appendix is to show the sharpness of Theorem \ref{theorem_approximation},
as asserted in Proposition \ref{proposition_sharp}. We start with a preliminary result
concerning the norm of a coherent Gaussian state.

\begin{lemma}[$\HH^p_k(\R^d)$ norm]
\label{lemma_norm_gskmn}
Let $p\geq 1$. For all $[\bm, \bn] \in \Z^{2d}$, we have
\begin{equation}
\label{eq:NormeGauss}
\mu_p \left(1+ |\xxkm|^{2} +  |\xxi{k}{\bn}|^{2}  \right)^p
\leq
\|\Psi_{k,\bm,\bn}\|_{\HH^p_k(\R^d)}^2
\leq
\lambda_p\left(1+ |\xxkm|^{2} +  |\xxi{k}{\bn}|^{2}  \right)^p,
\end{equation}
where the constants $\mu_p,\lambda_p > 0$ only depend on $p$ and $d$.
\end{lemma}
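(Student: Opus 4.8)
The plan is to reduce the statement to an explicit computation involving a single Gaussian state. First I would upgrade Lemma~\ref{lemma_differentiation} to arbitrary multi-indices: since $\Psi_{k,\bm,\bn}$ factorises as a product over the coordinates of one-dimensional Gaussian states, applying the lemma coordinatewise yields, for every $\ba\in\N^d$,
\[
\partial^{\ba}\Psi_{k,\bm,\bn}(\bx)
=
k^{[\ba]/2}\,P_{\ba}\!\big(k^{1/2}(\xxkm-\bx+i\xikn)\big)\,\Psi_{k,\bm,\bn}(\bx),
\]
where $P_{\ba}(\bw)=\prod_{j}Q_{a_j}(w_j)$ has total degree $[\ba]$, coefficients independent of $k$ and of $[\bm,\bn]$, and in particular satisfies $|P_{\ba}(\bw)|\le C_{\ba}(1+|\bw|)^{[\ba]}$. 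Then, in each constituent $k^{-2[\ba]}\||\bx|^q\partial^{\ba}\Psi_{k,\bm,\bn}\|_{L^2(\R^d)}^2$ of $\|\Psi_{k,\bm,\bn}\|_{\HH^p_k(\R^d)}^2$, I would perform the substitution $\by=k^{1/2}(\bx-\xxkm)$; this turns $|\Psi_{k,\bm,\bn}(\bx)|^2\,\dx$ into the fixed probability measure $\pi^{-d/2}e^{-|\by|^2}\,\dy$, turns the argument of $P_\ba$ into $-\by+ik^{1/2}\xikn$, and lets one power of $k$ cancel, so that the constituent becomes $k^{-[\ba]}\int_{\R^d}|\xxkm+k^{-1/2}\by|^{2q}\,|P_\ba(-\by+ik^{1/2}\xikn)|^2\,\pi^{-d/2}e^{-|\by|^2}\,\dy$.

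For the upper bound I would use $k\ge1$ to tame all remaining powers of $k$: $k^{-[\ba]}|P_\ba(-\by+ik^{1/2}\xikn)|^2\le C_\ba(1+|\by|+|\xikn|)^{2[\ba]}$ and $|\xxkm+k^{-1/2}\by|^{2q}\le(1+|\xxkm|+|\by|)^{2q}$, then use $1+a+b\le(1+a)(1+b)$ to separate out the $\by$ dependence, which integrates to a finite $\ba,q$-dependent constant against $e^{-|\by|^2}$. Because $q+[\ba]\le p$, this bounds each constituent by $C_p(1+|\xxkm|^2+|\xikn|^2)^p$, and summing over the finitely many admissible pairs $(\ba,q)$ gives the upper inequality in \eqref{eq:NormeGauss}.

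For the lower bound I would simply keep three of the nonnegative constituents: the $L^2$ term $\|\Psi_{k,\bm,\bn}\|_{L^2(\R^d)}^2=1$; the $(\ba=0,\,q=p)$ term $\||\bx|^p\Psi_{k,\bm,\bn}\|_{L^2(\R^d)}^2$; and, through \eqref{eq:SobNormFourier} together with the Fourier transform identity \eqref{eq_fourier_gskmn} (which gives $|\CF_k\Psi_{k,\bm,\bn}(\bxi)|^2=(k/\pi)^{d/2}e^{-k|\bxi-\xikn|^2}$), the quantity $\|\xi_j^p\,\CF_k\Psi_{k,\bm,\bn}\|_{L^2(\R^d)}^2$, which is $\le\|\Psi_{k,\bm,\bn}\|_{H^p_k(\R^d)}^2\le\|\Psi_{k,\bm,\bn}\|_{\HH^p_k(\R^d)}^2$ (the last inequality by retaining only the $q=0$ constituents). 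After the change of variables the last two are of the form $\int_{\R}(t_0+k^{-1/2}t)^{2p}\,\pi^{-1/2}e^{-t^2}\,dt$ with $t_0$ a coordinate of $\xxkm$, respectively of $\xikn$; since $p\ge1$, Jensen's inequality for $s\mapsto s^p$ bounds this below by $|t_0|^{2p}$. Optimising over $j$ and using $\max_j v_j^2\ge d^{-1}|\bv|^2$, I obtain $\|\Psi_{k,\bm,\bn}\|_{\HH^p_k(\R^d)}^2\ge\max(1,\,d^{-p}|\xxkm|^{2p},\,d^{-p}|\xikn|^{2p})$, and the elementary bound $a^p+b^p+c^p\ge 3^{1-p}(a+b+c)^p$ (applied with $a=1$, $b=|\xxkm|^2$, $c=|\xikn|^2$) converts this into the lower inequality in \eqref{eq:NormeGauss}.

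No step is genuinely hard; the points requiring care are, first, the bookkeeping of the powers of $k$ — one must check throughout that factors such as $k^{-[\ba]/2}$ and $k^{-1/2}|\by|$ are controlled uniformly using only $k\ge1$ — and, second, the observation that for the lower bound a single family of terms (say the $\bx$-derivative terms alone) does \emph{not} suffice to recover the full weight $(1+|\xxkm|^2+|\xikn|^2)^p$, so one genuinely has to combine the position-side term $\||\bx|^p\Psi_{k,\bm,\bn}\|_{L^2}$ with a Fourier-side term.
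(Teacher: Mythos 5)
Your proof is correct, and the upper bound together with the overall strategy for the lower bound (keep a handful of nonnegative constituents of the $\HH^p_k$ norm and bound each from below after the Gaussian change of variables) matches the paper's proof. The one genuine difference is how you extract the momentum contribution $|\xikn|^{2p}$: the paper keeps the term $k^{-2p}\|\partial_{x_j}^p\Psi_{k,\bm,\bn}\|_{L^2(\R^d)}^2$ and lower-bounds $k^{-p}\int|Q_p(y_j+ik^{1/2}\xi_j^{k,\bn})|^2e^{-|\by|^2}\,\mathrm{d}\by$ directly, which requires controlling the polynomial $Q_p$ from below (the mildly delicate point of that argument), whereas you pass to the Fourier side via \eqref{eq:SobNormFourier} and \eqref{eq_fourier_gskmn}, so that the momentum estimate becomes the \emph{same} Gaussian-moment-plus-Jensen computation as the position estimate, by the exact symmetry $|\CF_k\Psi_{k,\bm,\bn}|^2=(k/\pi)^{d/2}e^{-k|\bxi-\xikn|^2}$. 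Your route is slightly cleaner at that step and avoids any quantitative lower bound on $|Q_p(y+i\tau)|$; the paper's route stays entirely on the physical side and reuses Lemma \ref{lemma_differentiation}, which it needs anyway for the upper bound. Both are complete; the only point to watch in yours is the bookkeeping that the mean of the shifted Gaussian variable is exactly the lattice coordinate (so Jensen gives $|t_0|^{2p}$ with no loss), which you have handled.
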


\begin{proof}
Fix $[\bm,\bn] \in \Z^{2d}$ and let $j \in \{1,\dots,d\}$ be such that $n_j \geq [\bn]/d$.
We have
\begin{equation}\label{eq:TroisTermes}
\|\Psi_{k,\bm,\bn}\|_{\HH^p_k(\R^d)}^2
\geq
\|\Psi_{k,\bm,n} \|^2_{L^2(\R^d)}
+
\||\bx|^p \Psi_{k,\bm,n} \|^2_{L^2(\R^d)}
+
k^{-2p} \| \partial^{p}_{x_j} \Psi_{k,\bm,n}\|_{L^2(\R^d)}^2.
\end{equation}

The first term is equal to 1. The second is 
\begin{align*}
 \||\bx|^p \Psi_{k,\bm,n} \|^2_{L^2(\R^d)}
&=
 \left (\frac{k}{\pi} \right )^{d/2}
\int_{\R^d} |\bx|^{2p} e^{- k|\bx-\xxx{k}{\bm}|^2} \mathrm{d} \bx
\\
&=
 \pi^{-d/2}
\int_{\R^d} \left|\xxx{k}{\bm} + k^{-1/2} \by \right|^{2p} e^{- |\by|^2} \mathrm{d} \by
\geq
c k^p |\xxx{k}{\bm}|^{2p}.
\end{align*}

The last term in (\ref{eq:TroisTermes}) is equal to
\begin{align*}
k^{-2p}
 \| \partial^{p}_{x_j} \Psi_{k,\bm,n}\|_{L^2(\R^d)}^2
&= k^{-p}
\left\|
Q_{p} \left ( k^{1/2} (x_j^{k,\bm} -x_j  + i \xi_j^{k, \bn})  ) \right ) \Psi_{k,\bm,\bn}  \right\|_{L^2(\R^d)}^2 \\
&= k^{-p}  \pi^{-d/2}
\int_{\R^d} 
\left| Q_{p} (y_j + i k^{1/2} \xi_j^{k,\bn})\right|^2  e^{- |\by|^2} \mathrm{d} \by\\
&\geq (k^{-p}+  |\xxi{k}{\bn}|^{2p} ).
\end{align*}
This gives us the lower bound in \eqref{eq:NormeGauss}.

On the other hand, it holds that
\begin{align*}
\|\Psi_{k,\bm,\bn} \|_{\HH^p_k(\R^d)}^2
&= 
\sum_{[\ba] \leq p}
\sum_{q=0}^p
k^{-[\ba]} \left\|
|\bx|^q  \prod_{j=1}^d
Q_{a_j} \left ( k^{1/2} (x_j^{k,\bm} -x_j  + i \xi_j^{k, \bn})  ) \right ) \Psi_{k,\bm,\bn}  \right\|_{L^2(\R^d)}^2
\\
&=
\sum_{[\ba] \leq p}
\sum_{q=0}^p k^{-[\ba]} \pi^{-d/2}
\int_{\R^d} \left|\xxx{k}{\bm} + k^{-1/2} \by \right|^{2q}
\left|\prod_{j=1}^d Q_{a_j} (y_j + i k^{1/2} \xi_j^{k,\bn})\right|^2  e^{- |\by|^2} \mathrm{d} \by.
\end{align*}
We then observe that
\begin{multline*}
k^{-[\ba]}\int_{\R^d} \left|\xxx{k}{\bm} + k^{-1/2} \by \right|^{2q}
\left|\prod_{j=1}^d Q_{a_j} (y_j+ i k^{1/2} \xi_j^{k,\bn})\right|^2 e^{- |\by|^2} \mathrm{d} \by
\\
\leq
C_{d,p}  k^{-[\ba]} |\xxkm|^{2q} \prod_{j=1}^d \left(1+| k^{1/2} \xi_j^{k,\bn}|^{a_j}\right)^2
\leq
C_{d,p} \left(1+ |\xxkm|^{2} + |\xxi{k}{\bn}|^{2} \right)^{p},
\end{multline*}
which lead to the upper bound in \eqref{eq:NormeGauss} after summation.
\end{proof}

We then need another intermediate result concerning the inner product
of elements of the Gabor and dual frames.

\begin{lemma}[$L^2$ product of the dual frame]
\label{Lem:PSDerivPsiStar}
For all $[\bm,\bn], [\bm', \bn'] \in \Z^{2d}$, we have
\begin{equation}
\label{eq_diff_gskmn_star_gskmn}
\left|(\gskmn^\star,\gskmnp)\right|
\leq
C
e^{-\frac{\pi}{8} |[\bm,\bn]-[\bm',\bn']|^{1/2}}.
\end{equation}
\end{lemma}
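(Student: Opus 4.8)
The plan is to combine the reproducing formula \eqref{eq:DeuxiemeDecompoPsiStar} with the sharp Gaussian decay \eqref{eq:NormePS} and the sub-exponential decay \eqref{eq:PSStar}. First I would expand $\gskmn^\star$ by means of \eqref{eq:DeuxiemeDecompoPsiStar} and pair the result with $\gskmnp$, using linearity of the inner product in its first argument, to obtain
\[
(\gskmn^\star,\gskmnp)
=
\sum_{[\bm'',\bn''] \in \Z^{2d}} (\gskmn^\star,\gskmnpp^\star)\,(\gskmnpp,\gskmnp).
\]
Writing $\bq \eq [\bm,\bn]$, $\bq' \eq [\bm',\bn']$, $\bq'' \eq [\bm'',\bn'']$ for brevity, the triangle inequality together with \eqref{eq:NormePS} and with \eqref{eq:PSStar} applied for $\varepsilon = 1/2$ then yields
\[
\bigl|(\gskmn^\star,\gskmnp)\bigr|
\leq
C \sum_{\bq'' \in \Z^{2d}} e^{-|\bq-\bq''|^{1/2}}\, e^{-\frac{\pi}{4}|\bq''-\bq'|^2}.
\]

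The remaining task is to convert this lattice sum into the claimed bound by splitting it according to the location of $\bq''$. Set $\delta \eq |\bq-\bq'|$, and suppose first that $\delta$ is large (say $\delta^{3/2} \geq 4$). For every $\bq''$ the triangle inequality forces $\max(|\bq-\bq''|,|\bq''-\bq'|) \geq \delta/2$. If $|\bq-\bq''| \geq \delta/2$, then $e^{-|\bq-\bq''|^{1/2}} \leq e^{-(\delta/2)^{1/2}} \leq e^{-\frac{\pi}{8}\delta^{1/2}}$, since $1/\sqrt{2} \geq \pi/8$; if instead $|\bq''-\bq'| \geq \delta/2$, then $e^{-\frac{\pi}{4}|\bq''-\bq'|^2} \leq e^{-\frac{\pi}{8}|\bq''-\bq'|^2}\,e^{-\frac{\pi}{32}\delta^2} \leq e^{-\frac{\pi}{8}|\bq''-\bq'|^2}\,e^{-\frac{\pi}{8}\delta^{1/2}}$, using $\delta^{3/2} \geq 4$. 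In either case the general term is bounded by $e^{-\frac{\pi}{8}\delta^{1/2}}\,e^{-\frac{\pi}{8}|\bq''-\bq'|^2}$, so that
\[
\bigl|(\gskmn^\star,\gskmnp)\bigr|
\leq
C\, e^{-\frac{\pi}{8}\delta^{1/2}} \sum_{\bq'' \in \Z^{2d}} e^{-\frac{\pi}{8}|\bq''-\bq'|^2}
\leq
C\, e^{-\frac{\pi}{8}|\bq-\bq'|^{1/2}},
\]
the last sum being a finite constant independent of $\bq'$. For the finitely many small values of $\delta$ left over, the estimate is immediate: the sum $\sum_{\bq''} e^{-|\bq-\bq''|^{1/2}} e^{-\frac{\pi}{4}|\bq''-\bq'|^2}$ is bounded by the universal constant $\sum_{\bq''} e^{-\frac{\pi}{4}|\bq''-\bq'|^2}$, while $e^{-\frac{\pi}{8}\delta^{1/2}}$ stays bounded below by a positive constant, so enlarging $C$ absorbs these cases.

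I expect the only delicate point to be the bookkeeping in the case split, in particular checking that the numerical constant $\pi/8$ survives the argument (this reduces to $1/\sqrt{2} \geq \pi/8$ and to $\delta^{3/2} \geq 4$ for $\delta$ large) and that one can simultaneously pull out the decay factor $e^{-\frac{\pi}{8}\delta^{1/2}}$ and keep a convergent residual series in $\bq''$. Everything else — the expansion, the triangle inequalities, and summing a Gaussian over a lattice — is routine.
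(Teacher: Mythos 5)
Your argument is correct and follows essentially the same route as the paper: expand $\gskmn^\star$ via \eqref{eq:DeuxiemeDecompoPsiStar}, insert the bounds \eqref{eq:PSStar} (with $\varepsilon=1/2$) and \eqref{eq:NormePS}, and control the resulting convolution-type lattice sum. The only (immaterial) difference is in that last elementary step, where you split cases according to whether $|\bq-\bq''|$ or $|\bq''-\bq'|$ exceeds $\delta/2$, whereas the paper peels off the factor $e^{-\frac{\pi}{8}|\bq-\bq'|^{1/2}}$ directly using $|\bq''-\bq'|^2\geq|\bq''-\bq'|^{1/2}$ on the integer lattice together with the subadditivity of $t\mapsto\sqrt{t}$.
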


\begin{proof}
Using \eqref{eq:Eq:PSDerivFacile} and \eqref{eq:PSStar}, we get
\begin{align*}
\left|(\gskmn^\star,\gskmnp)\right|
&\leq
\sum_{[\bm'',\bn''] \in \Z^{2d}}
|(\gskmn^\star,\gs{k}{\bm''}{\bn''}^\star)|
|( \gs{k}{\bm''}{\bn''},\gskmnp)|
\\
&\leq
C
\sum_{[\bm'',\bn''] \in \Z^{2d}}
e^{-|[\bm'',\bn'']-[\bm, \bn] |^{1/2}}
e^{-\frac{\pi}{8}|[\bm'',\bn'']-[\bm', \bn']|^2}
\\
&=
C
\sum_{[\bm'',\bn''] \in \Z^{2d}}
e^{-|\bq''-\bq|^{1/2}-\frac{\pi}{8}|\bq''-\bq'|^2},
\end{align*}
where we introduced
$\bq:= [\bm, \bn]$, $\bq':= [\bm', \bn']$ and $\bq'':= [\bm'', \bn'']$
for the sake of shortness. Since $\bq',\bq'' \in \Z^{2d}$, we have
$|\bq''-\bq'|^2 \geq |\bq''-\bq'|^{1/2}$, and hence
\begin{equation*}
e^{-|\bq''-\bq|^{1/2}-\frac{\pi}{8}|\bq''-\bq'|^2}
\leq
e^{-|\bq''-\bq|^{1/2}-\frac{\pi}{8}|\bq''-\bq'|^{1/2}}
=
e^{-\frac{\pi}{8}\left (|\bq''-\bq|^{1/2}+|\bq''-\bq'|^{1/2}\right )}
e^{-\left (1-\frac{\pi}{8}\right )|\bq''-\bq|^{1/2}},
\end{equation*}
where we note that $1-\pi/8 > 0$. Then, since
\begin{equation*}
\sqrt{|\bx-\by|}
\leq 
\sqrt{|\bx| + |\by|}
\leq
\sqrt{|\bx|}+ \sqrt{|\by|} \qquad \forall \bx,\by \in \R^{2d},
\end{equation*}
we have
\begin{equation*}
e^{-|\bq''-\bq|^{1/2}-\frac{\pi}{8}|\bq''-\bq'|^2}
\leq
e^{-\frac{\pi}{8}|\bq'-\bq|^{1/2}}
e^{-\left (1-\frac{\pi}{8}\right )|\bq''-\bq|^{1/2}}.
\end{equation*}
Therefore, we get
\begin{align*}
\left|(\gskmn^\star,\gskmnp)\right|
&\leq
C
e^{-\frac{\pi}{8}|\bq - \bq'|^{1/2}}
\sum_{[\bm'',\bn''] \in \Z^{2d}}
 e^{-(1-\frac{\pi}{8}) |\bq''- \bq |^{1/2}}
\\
&\leq
C e^{-\frac{\pi}{8} |\bq - \bq' |^{1/2}}
\end{align*}
as announced.
\end{proof}

\begin{proof}[Proof of Proposition \ref{proposition_sharp}]
Let $D \geq k^{-1/2}$. Pick $\bm = (2\lceil k^{1/2}D \rceil,0,0)$ and let
$u = \gs{k}{\bm}{\bzero}$. For any $p \geq 0$, from Lemma \ref{lemma_norm_gskmn}, we have
\begin{equation*}
\|u\|_{\HH^{p}_k(\R^d)} \geq \mu_{p} (2D)^{-p}.
\end{equation*}
On the other hand since
\begin{align*}
\Pi_D\gs{k}{\bm}{\bzero}
&=
\sum_{\substack{[\bm',\bn'] \in \Z^{2d} \\ |[\bm',\bn']| \leq k^{1/2} D}}
(\gs{k}{\bm}{\bzero},\gskmnp^\star) \gskmnp
\end{align*}
we have
\begin{align*}
\|\Pi_D\gs{k}{\bm}{\bzero}\|_{\HH^p_k(\R^d)}
&\leq
\sum_{\substack{[\bm',\bn'] \in \Z^{2d} \\ |[\bm',\bn']| \leq k^{1/2} D}}
|(\gs{k}{\bm}{\bzero},\gskmnp^\star)| \|\gskmnp\|_{\HH_k^p(\R^d)}
\\
&\leq
C
\sum_{\substack{[\bm',\bn'] \in \Z^{2d} \\ |[\bm',\bn']| \leq k^{1/2} D}}
e^{-\frac{\pi}{8}|\bm-\bm'|^{1/2}} \|\gskmnp\|_{\HH_k^p(\R^d)}
\\
&\leq C
\sum_{\substack{[\bm',\bn'] \in \Z^{2d} \\ |[\bm',\bn']| \leq k^{1/2} D}}
e^{-\frac{\pi}{8}|\bm-\bm'|^{1/2}}  D^p\\
&\leq C D^p
M(k^{1/2} D)^{2d+1} e^{-a \sqrt{k^{1/2} D}}\\
&\leq C D^p e^{-a' \sqrt{k^{1/2} D}}\\
&\leq C D^p e^{-a' \sqrt{D}},
\end{align*}
since $k\geq 1$.

In particular, there exists $D_*$ such that for all $D\geq D^*$, this quantity is smaller than $\mu_{p} D^{-p}$. We thus have, for all $D\geq D^*$
\begin{equation*}
\|u-\Pi_D u\|_{\HH_k^p(\R^d)}
\geq
\|u\|_{\HH^p_k(\R^d)}
-
\|\Pi_D u\|_{\HH^p_k(\R^d)}
\geq
\mu_{p} D^{-p},
\end{equation*}
and thanks to Lemma \ref{lemma_norm_gskmn}, this quantity is larger than $\frac{\mu_{p+r}}{\lambda_p} D^{-r} \|u\|_{\HH^{p+r}(\R^d)}$. The result follows.
\end{proof}

\section{Proof of Theorem \ref{theorem_approximation} using Modulation Spaces}\label{Sec:Modulation}

\subsection{Rescaling}

First of all, let us explain how the proof of Theorem \ref{theorem_approximation} can be reduced to the case where $k=1$.
Recall that the isometry $\delta_k$ was introduced in (\ref{eq:DefRescaling}). For any $s\in \N$, we write, for all $\ba \in \Z^d$ and $q\in \N$
with $[\ba] + q =s$
\begin{align*}
\| |\bx|^q \partial^{\ba} u\|_{L^2(\R^d)}
=
\| \delta_{k} (|\bx|^q \partial^{\ba} u)\|_{L^2(\R^d)}
&=
\| k^{-q/2} |\bx|^q \delta_{k} (\partial^{\ba} u)\|_{L^2(\R^d)}
\\
&=
\| k^{(-q+ \ba)/2}  |\bx|^q \partial^{\ba} (\delta_{k} u)\|_{L^2(\R^d)}
=
k^{-s/2} k^{[\ba]} \| |\bx|^q \partial^{\ba} \delta_{k} u\|_{L^2(\R^d)} .
\end{align*}

Therefore,  if we introduce the norm
$$\|u\|^2_{\widetilde{H}_k^s(\R^d)} \eq \sum_{\alpha+\beta = s} k^{-2|\beta|} \| x^\alpha \partial^\beta u\|_{L^2(\R^d)}^2,$$
for any parameter $k\geq 1$, we have
$$\|u\|_{\widetilde{H}_k^s(\R^d)}= k^{-s/2} \| \delta_{k} u\|_{\widetilde{H}_1^s(\R^d)}.$$

We want to study
\begin{align*}
\left \|u - \sum_{|(\bm,\bn)|\leq D \sqrt{k/\pi}} \left(u, \Psi^*_{k,\bm,\bn}\right) \Psi_{k,\bm,\bn}\right\|_{\widetilde{H}_k^s(\R^d)} &= k^{-\frac{s}{2}} \left\|\delta_{k} u - \sum_{|(\bm,\bn)|\leq D \sqrt{k/\pi}} \left(\delta_{k} u,  \delta_{k} \Psi^*_{k,\bm,\bn}\right) \delta_{k} \Psi_{k,\bm,\bn}\right\|_{\widetilde{H}_1^s(\R^d)}\\
 &= k^{-\frac{s}{2}}  \left\|\delta_{k} u - \sum_{|(\bm,\bn)|\leq D \sqrt{k/\pi}} \left(\delta_{k} u,   \Psi^*_{1,\bm,\bn}\right) \Psi_{1,\bm,\bn}\right\|_{\widetilde{H}_1^s(\R^d)}.
 \end{align*}

 Suppose that we can show that, for any $v\in \widehat{H}^{s+s'}$, we have
\begin{equation}\label{eq:K1}
 \left\|v - \sum_{|(\bm,\bn)|\leq D} \langle(v, \Psi^*_{1,\bm,\bn}\rangle \Psi_{1,\bm,\bn}\right\|_{\widetilde{H}_1^s}\leq   C(s,s') D^{-s'} \|v\|_ {\widetilde{H}_1^{s+s'}}.
\end{equation}

We will then deduce from what precedes that
\begin{align*}
\|u - \Pi_D u\|_{\HH^p_k(\R^d)} &= \sum_{s=0}^p \|u - \Pi_D u\|_{\widetilde{H}^s_k(\R^d)}\\
&\leq \sum_{s=0}^p C(s,s') k^{-\frac{s}{2}} D^{-s'} k^{-\frac{s'}{2}} \|\delta_k u\|_ {\widetilde{H}_1^{s+s'}}\\
& = D^{-s'}  \sum_{s=0}^p C(s,s')  \| u\|_ {\widetilde{H}_k^{s+s'}}\\
&\leq C D^{-s'} \|u\|_{H_k^{p+s'}},
\end{align*}
which gives us (\ref{eq_approximation}). Therefore, Theorem \ref{theorem_approximation} will follow if we prove (\ref{eq:K1}).

\subsection{Reminder on modulation spaces}

If $(\bx_0, \bxi_0)\in \R^{2d}$, we introduce the function on $\R^d$
\begin{equation}
\label{eq:DefGaussian}
\psi_{\bx_0,\bxi_0}(\bx)
\eq
\pi^{-d/4} e^{ i(\bx-\bx_0)\cdot \bxi_0} e^{-\frac{|\bx-\bx_0 |^2}{2}}
\quad \forall \bx \in \R^d.
\end{equation}
For every $s\in \N$, we introduce the weight $v_s$ on $\R^{2d}$ given by $v_s(\bx,\bxi) = (1+ |\bx|^2 + |\bxi|^2)^{s/2}$, and we define the corresponding modulation spaces $M^p_{v_s}$ for all $p\geq 1$ to be the space of functions such that the following norm is finite:
$$ \|f\|_{M^p_{v_s}} := \left(\int_{\R^{2d}} |v_s(\bx,\bxi)|^p \left| \left( f,  \psi_{\bx,\bxi} \right) \right|^p \mathrm{d}\bx \mathrm{d}\bxi\right)^{1/p}.$$

Let us discuss another interpretation of these spaces when $p=2$, following \cite[Proposition 11.3.1]{grochenig2001foundations}.
Let $\ba,\bb \in \Z^d$ with $[\ba] + [\bb]\leq s$, so that $|\bx^{\ba} \bxi^{\bb} | \leq v^2_s(\bx,\bxi)$. Writing $(\tau_{\bx} f)(\by) := f(\by-\bx)$ and $g(\bx) = e^{-|\bx|^2/2}$,  we then have
\begin{align*}
 \|f\|^2_{M^2_{v_s}} &\geq \int_{\R^{d}} |\bx^{\ba}| \left(\int_{\R^d} |\bxi^{\bb}| \left|\left( e^{i \bx\cdot \bxi} \tau_{\bx} f, g   \right) \right|^2\right) \mathrm{d} \bx\\
 &= \int_{\R^{d}} |\bx^{\ba}| \left(\int_{\R^d} |\bxi^{\bb}| \left| (\mathcal{F} (\tau_{\bx} f, g))(\bxi)  \right|^2\right) \mathrm{d} \bx\\
 &= \int_{\R^{d}} |\bx^{\ba}| \left\| \partial_{\bx}^{\bb} (\tau_{\bx} f, g) \right\|_{L^2}^2 \mathrm{d}\bx\\ 
 &= \int_{\R^{d}}\int_{\R^{d}}  |\bx^{\ba}| |\partial_{\bx}^{\bb} f|^2(\bx-\by) |g(\by)|^2 \mathrm{d}\by \mathrm{d}\bx\\
 &= \int_{\R^d}  |\partial_{\bz}^{\bb} f|^2(\bz) \left(\int_{\R^{d}}  |\by^{\ba} + \bz^{\ba}| |g(\by)|^2 \mathrm{d}\bz \right) \mathrm{d}\by\\
 &\geq C \int_{\R^d}  |\partial_{\bz}^{\bb} f|^2(\bz) |\bz^{\ba}| \mathrm{d}\bz
\end{align*}

Conversely,  \cite[Proposition 11.3.1]{grochenig2001foundations} implies that $ \|f\|_{M^2_{v_s}}\leq C \|f\|_{\widetilde{H}_1^s(\R^d)}$, so that $\|\cdot \|_{M^2_{v_s}}$ and $\| \cdot \|_{\widetilde{H}_1^s(\R^d)}$ are equivalent norms on $M_{v_s}^2$.

\subsection{Sequence spaces}

Let $s\in \N$.  If $c= (c_{\bm,\bn})_{[\bm,\bn]\in \Z^{2d}}$, we introduce the norm
$$\|c\|_{\ell^2_s} := \left(\sum_{[\bm,\bn]\in \Z^{2d}} |c_{\bm,\bn}|^2 (1+ |\bm|^2 + |\bn|^2)^s\right)^{1/2},$$
and denote by $\ell^2_s$ the set of sequences such that this norm is finite.

If $D\in \N$, write $\chi_D$ for the multiplication by the sequence taking value $1$ if $|(\bm,\bn)|\geq D$ and $0$ otherwise.  We then clearly have, for any $s,s'\geq 0$,
$$\|\chi_D\|_{\ell^2_{s+s'} \to \ell^2_s} \leq D^{-s'}.$$

\subsection{Proof of Theorem \ref{theorem_approximation}}

First of all, we note that the function $g(\bx) = e^{-|\bx|^2/2}$ belongs to $M^1_{v_s}$ for any $s$. Therefore, the canonical dual function $\gamma= \Psi^*_{1,0,0}$ does also belong to $M^1_{v_s}$ for all $s$, thanks to \cite[Theorem 13.2.1]{grochenig2001foundations}.

It follows from \cite[Theorem 12.2.4]{grochenig2001foundations} that the restriction to $M_s^2$ of the operator $T_1$ introduced in (\ref{eq:DefCoefOp}) is bounded from $M_s^2$ to $\ell^2_{s}$.  We shall denote this operator by $C_\gamma$.

We also introduce the operator
$D_g : c \mapsto \sum_{m,n\in \Z^{2d}} c_{m,n} \Psi_{m,n}$.
Thanks to \cite[Theorem 12.2.4]{grochenig2001foundations},
$D_g$ is a bounded operator from $\ell^2_s$ to $M^2_s$.  
Furthermore, thanks to \cite[Corollary 12.2.6]{grochenig2001foundations},
for any $u\in M^2_s$, we have $u= D_g C_\gamma u$.

 We therefore have, for any $u\in M_s^2$,
 \begin{align*}
 \left\|u - \sum_{|(\bm,\bn)|\leq \frac{D}{\sqrt{\pi}}} \langle(u, \Psi^*_{1,\bm,\bn}\rangle \Psi_{1,\bm,\bn}\right\|_{M^2_s} &= \left\|D_g \chi_{\frac{D}{\sqrt{\pi}}} C_\gamma u\right\|_{M^2_s}\\
 & \leq C(s,s') \left\|\chi_{\frac{D}{\sqrt{\pi}}}\right\|_{\ell^2_{s+s'}\to \ell^2(s)} \|u\|_{M^2_{s+s'}}\\
 & \leq C(s,s') D^{-s'} \|u\|_ {M^2_{s+s'}},
 \end{align*}
 so that 
$$ \left\|u - \sum_{|(\bm,\bn)|\leq \frac{D}{\sqrt{\pi}}} \langle(u, \Psi^*_{1,\bm,\bn}\rangle \Psi_{1,\bm,\bn}\right\|_{\widetilde{H}_1^s}\leq   C(s,s') D^{-s'} \|u\|_ {\widetilde{H}_1^{s+s'}},$$
so that (\ref{eq:K1}) follows.

\bibliographystyle{amsplain}
\bibliography{bibliography.bib}

\end{document}